\documentclass{fancypaper-cm}
\usepackage[extdef=true]{delimset}
\DeclareMathDelimiterSet{\scal}[2]{
  \selectdelim[l]< {#1} \mathpunct{}\selectdelim[p]| {#2} \selectdelim[r]>
}

\newcommand{\menge}[2]{
  \bigl\{{#1}\mid{#2}\bigr\}
}
\DeclareMathDelimiterSet{\Menge}[2]{
  \selectdelim[l]\{{#1}\selectdelim[m]|{#2}\selectdelim[r]\}
}

\newcommand*\Cdot{{\mkern 1.6mu\cdot\mkern 1.6mu}}

\newcommand{\NN}{\mathbb{N}}
\newcommand{\RR}{\mathbb{R}}

\newcommand{\HH}{\mathcal{H}}
\newcommand{\GG}{\mathcal{G}}

\newcommand{\UU}{\mathcal{U}}
\newcommand{\VV}{\mathcal{V}}
\newcommand{\WW}{\mathcal{W}}
\newcommand{\ZZ}{\mathcal{Z}}
\newcommand{\HHH}{\boldsymbol{\mathcal{H}}}
\newcommand{\VVV}{\boldsymbol{\mathcal{V}}}

\newcommand{\HS}{\mathsf{H}}
\newcommand{\EE}{\mathsf{E}}

\newcommand{\pinf}{{+}\infty}
\newcommand{\minf}{{-}\infty}
\newcommand{\emp}{\varnothing}
\newcommand{\exi}{\exists\,}
\renewcommand{\leq}{\leqslant}
\renewcommand{\geq}{\geqslant}

\newcommand{\zeroun}{\intv[o]{0}{1}}
\newcommand{\rzeroun}{\intv[l]{0}{1}}

\newcommand{\RX}{\intv[l]{\minf}{\pinf}}
\newcommand{\RP}{\RR_+}
\newcommand{\RPP}{\intv[o]{0}{\pinf}}

\DeclareMathOperator{\ran}{ran}

\DeclareMathOperator{\card}{card}

\newcommand{\Sum}{\displaystyle\sum}
\newcommand{\Id}{\mathrm{Id}}


\DeclareMathOperator{\dom}{dom}

\DeclareMathOperator{\proj}{proj}
\DeclareMathOperator{\prox}{prox}
\DeclareMathOperator{\reli}{ri}

\newcommand{\minimize}[2]{
  \underset{\substack{{#1}}}{\operatorname{minimize}}\;\;#2
}

\DeclareMathOperator{\cvar}{CVaR}

\crefname{model}{Model}{Models}
\theoremstyle{definition}
\newtheorem{model}[theorem]{Model}

\usepackage{tikz}
\usetikzlibrary{arrows.meta,positioning}		

\usepackage{booktabs}
\usepackage{multirow}

\begin{document}


\author{Minh N. B\`ui}
\affil{%
  Universit\"{a}t Graz
  \affilcr
  Institut f\"{u}r Mathematik und Wissenschaftliches Rechnen, NAWI Graz
  \affilcr
  Heinrichstra{\ss}e 36, 8010 Graz, Austria
  \affilcr
  \email{minh.bui@uni-graz.at}
}

\title{A Block-Activated Decomposition Algorithm for\\
  Multi-Stage Stochastic Variational Inequalities}

\date{~}

\thispagestyle{plain.scrheadings}

\maketitle

\begin{abstract}
We develop a block-activated decomposition algorithm for
multi-stage stochastic variational inequalities with
nonanticipativity constraints. The algorithm features two computational
novelties: (i) At each iteration, it activates only a
user-chosen subset of scenarios. (ii) For each activated scenario,
it employs the resolvent of the cost operator and the projector
onto the constraint set separately. These reduce computational load
and enhance tractability, in contrast with existing approaches,
which often require evaluating the resolvent of the sum of the cost
operator and the normal cone operator of the constraint set.
As an application, we demonstrate that in risk-averse stochastic
programming with conditional value-at-risk objective functions,
our method requires only projecting onto constraint sets,
together with solving univariate equations involving the proximity
operators of the cost functions, thereby avoiding solving
high-dimensional constrained subproblems as required by existing
methods.
\end{abstract}

\begin{keywords}
Block-activated algorithm;
Conditional value-at-risk;
Monotone operator;
Multi-stage stochastic programming;
Multi-stage stochastic variational inequality;
Problem decomposition;
Projective splitting.
\end{keywords}

\newpage

\section{Introduction}
\label{sec:1}

Motivated by the multi-stage stochastic variational inequality
model developed by Rockafellar and Wets in \cite{Rockafellar17}, we
consider in this paper the multi-stage stochastic equilibrium
\cref{prob:1}, which will be formulated within the following
multi-stage decision-making model of \cite{Rockafellar91}.

\begin{model}
\label{m:1}
We consider an $N$-stage decision-making process
with \emph{nonanticipativity}, where $N\in\NN\smallsetminus\set{0}$.
\begin{center}
\begin{tikzpicture}[%
node distance=4.2ex,
every node/.style={font=\normalfont},
decision/.style={draw, rectangle, fill=dgreen!20},
observation/.style={draw, circle, fill=dred!20},
arrow/.style={thick, -{Latex[length=3.0mm, width=2mm]}}
]

\node[decision] (x1) {$x_{[1]}$};
\node[observation, right=of x1] (xi1) {$\xi_{[1]}$};
\node[decision, right=of xi1] (x2) {$x_{[2]}(\xi_{[1]})$};
\node[observation, right=of x2] (xi2) {$\xi_{[2]}$};
\node[decision, right=of xi2] (x3) {$x_{[3]}\brk!{\xi_{[1]},\xi_{[2]}}$};
\node[right=of x3] (dots) {$\cdots$};
\node[decision, right=of dots] (xN)
{$x_{[N]}\brk!{\xi_{[1]},\ldots,\xi_{[N-1]}}$};

\draw[arrow] (x1) -- (xi1);
\draw[arrow] (xi1) -- (x2);
\draw[arrow] (x2) -- (xi2);
\draw[arrow] (xi2) -- (x3);
\draw[arrow] (x3) -- (dots);
\draw[arrow] (dots) -- (xN);

\node[below=0.4cm of x1] {decision};
\node[below=0.4cm of xi1] {observation};
\node[below=0.4cm of x2] {decision};
\node[below=0.4cm of xi2] {observation};
\node[below=0.4cm of x3] {decision};
\node[below=0.4cm of xN] {decision};
\end{tikzpicture}
\end{center}
Here, the set $\Xi$ of all possible scenarios for the process
is assumed to be finite, with known probability $\pi(\xi)\in\rzeroun$
for each scenario $\xi=(\xi_{[1]},\ldots,\xi_{[N]})\in\Xi$,
where $\xi_{[k]}$ designates the information revealed
at stage $k$ after the $k$th-stage decision has been made but before
the $(k+1)$th-stage decision is made\footnote{For clarity, we use
square brackets in the subscript $[k]$ to distinguish stage index
from sequence index.}.
Given a scenario $\xi\in\Xi$, $x_{[k]}(\xi)\in\RR^{d_k}$
designates a decision vector at the $k$th stage and
\begin{equation}
\label{e:rdxk}
x(\xi)
=\brk!{x_{[1]}(\xi),\ldots,x_{[N]}(\xi)}
\in\RR^{d_1}\times\cdots\times\RR^{d_N}=\RR^d,
\quad\text{where}\,\,d=d_1+\cdots+d_N,
\end{equation}
designates a full decision vector, which thus defines mappings
$x_{[k]}\colon\Xi\to\RR^{d_k}$ and $x\colon\Xi\to\RR^d$.
We equip the set
\begin{equation}
\HH=\set!{x\colon\Xi\to\RR^d\colon
\xi\mapsto\brk!{x_{[1]}(\xi),\ldots,x_{[N]}(\xi)}}
\end{equation}
with scenario-wise addition and scalar multiplication,
together with the expectation scalar product
\begin{equation}
(\forall x\in\HH)(\forall y\in\HH)\quad
\scal{x}{y}_{\HH}
=\EE\brk!{\scal{x(\Cdot)}{y(\Cdot)}_2}
=\sum_{\xi\in\Xi}\pi(\xi)\scal{x(\xi)}{y(\xi)}_2,
\end{equation}
where $\scal{\Cdot}{\Cdot}_2$ is the $\ell^2$ scalar product
on $\RR^d$.
We next formulate the nonanticipativity constraint, which ensures
that each $k$th-stage decision mapping $x_{[k]}$ depends only on
information revealed before the $k$th-stage decision is made and, in
particular, that the first-stage decision is scenario-independent.
To do so, the following equivalence relation will be useful.
At stage $k\in\set{1,\ldots,N}$, we say that two scenarios
$\xi=(\xi_{[1]},\ldots,\xi_{[N]})\in\Xi$
and
$\eta=(\eta_{[1]},\ldots,\eta_{[N]})\in\Xi$
are \emph{information-equivalent},
written $\xi\sim_k\eta$, if they provide the same past information
for making the $k$th-stage decision, that is,
\begin{equation}
\xi\sim_k\eta
\quad\Longleftrightarrow\quad
\brk!{\xi_{[1]},\ldots,\xi_{[k-1]}}
=\brk!{\eta_{[1]},\ldots,\eta_{[k-1]}}.
\end{equation}
As is customary,
the collection of equivalence classes of
$\sim_k$ is denoted by $\Xi/\sim_k$. Note that
$\Xi/\sim_1=\set{\Xi}$. Thus, the nonanticipativity constraint is
modeled by the vector subspace
\begin{equation}
\VV=\menge{x\in\HH}{\brk!{\forall k\in\set{1,\ldots,N}}
(\forall S\in\Xi/\sim_k)\,\,
\text{$x_{[k]}$ is constant on $S$}}
\end{equation}
of $\HH$.
\end{model}

We now state our main problem and refer the reader to
\cref{sec:2} for notation and background on monotone operator
theory.

\begin{problem}[multi-stage stochastic equilibrium]
\label{prob:1}
Consider \cref{m:1}.
For each scenario $\xi\in\Xi$,
let $A(\xi,\Cdot)\colon\RR^d\to 2^{\RR^d}$ be maximally monotone
(with respect to the $\ell^2$ scalar product) and
let $\emp\neq C(\xi)\subset\RR^d$ be closed and convex.
The task is to solve the equilibrium problem
\begin{equation}
\label{e:1}
\text{find $x\in\VV$ and $v^*\in\VV^{\perp}$ such that}\,\,
(\forall\xi\in\Xi)\,\,
{-}v^*(\xi)\in A\brk!{\xi,x(\xi)}+N_{C(\xi)}x(\xi),
\end{equation}
under the assumption that a solution exists.
\end{problem}

In \cref{prob:1}, if each $A(\xi,\Cdot)$ is single-valued,
then \cref{e:1} reduces to the multi-stage stochastic variational
inequality proposed by Rockafellar and Wets in
\cite[Definition~3.1]{Rockafellar17}, that is,
\begin{equation}
\label{e:rw17}
\text{find $x\in\VV$ and $v^*\in\VV^{\perp}$ such that}\,\,
(\forall\xi\in\Xi)\,\,
{-}A\brk!{\xi,x(\xi)}-v^*(\xi)\in N_{C(\xi)}x(\xi).
\end{equation}
In \cite{Rockafellar19} Rockafellar and Sun proposed a decomposition
method for solving \cref{e:rw17} by extending the classical
progressive hedging algorithm of \cite{Rockafellar91} to the
context of monotone operators.
The core idea is that these methods are instantiations of Spingarn's
method of partial inverses (see, e.g.,
\cite[Section~5.4.4]{Combettes24}).
In essence, under the assumption that
\begin{equation}
(\forall\xi\in\Xi)\quad
\text{$A(\xi,\Cdot)$ is single-valued
and $A(\xi,\Cdot)+N_{C(\xi)}$ is maximally monotone},
\end{equation}
and given a starting point $(x_0,v_0^*)\in\VV\times\VV^{\perp}$
and a parameter $\gamma\in\RPP$, the progressive hedging algorithm
\cite[Eq.~(2.5)]{Rockafellar19} reads
\begin{equation}
\label{e:PH}
\begin{array}{l}
\text{for $n=0,1,\ldots$}\\
\left\lfloor
\begin{array}{l}
\text{for each scenario $\xi\in\Xi$}\\
\left\lfloor
\begin{array}{l}
a_n(\xi)=J_{\gamma\brk{A(\xi,\Cdot)+N_{C(\xi)}}}\brk!{
x_n(\xi)-\gamma v_n^*(\xi)}
\end{array}
\right.\\
x_{n+1}=\proj_{\VV}a_n\\
v_{n+1}^*=v_n^*+\gamma^{-1}\proj_{\VV^{\perp}}a_n.
\end{array}
\right.
\end{array}
\end{equation}
Despite \cref{e:PH} being a parallel method, its efficiency hinges
on two critical factors:
\begin{enumerate}[label={\normalfont\bfseries F{\arabic*}}]
\item\label{cf:1}
The computational ability to process \emph{all} subproblems
\begin{equation}
\label{e:a24j}
\begin{array}{l}
\text{for each scenario $\xi\in\Xi$}\\
\left\lfloor
\begin{array}{l}
a_n(\xi)=J_{\gamma\brk{A(\xi,\Cdot)+N_{C(\xi)}}}\brk!{
x_n(\xi)-\gamma v_n^*(\xi)}
\end{array}
\right.
\end{array}
\end{equation}
at \emph{every} iteration---with the implicit assumption that
\begin{equation}
\label{e:impl}
\text{these subproblems can be handled effectively.}
\end{equation}
The step \cref{e:a24j} becomes computationally demanding when the
number of scenarios is substantial.
The implicit assumption \cref{e:impl} naturally leads to the next
critical factor.
\item\label{cf:2}
The tractability of evaluating the resolvents
\begin{equation}
\brk!{J_{\gamma\brk{A(\xi,\Cdot)+N_{C(\xi)}}}}_{\xi\in\Xi}
\end{equation}
in \cref{e:a24j}. In general, such resolvents
cannot be explicitly expressed via those of the constituent
operators, a difficulty that has long been recognized.
Thus, each subproblem in \cref{e:PH} requires
an iterative solution method to evaluate
$J_{\gamma\brk{A(\xi,\Cdot)+N_{C(\xi)}}}$,
with the additional challenge that inexact solutions
to these subproblems can compromise the overall convergence of the
algorithm.
This is particularly acute in applications
where the cost operators $A(\xi,\Cdot)$ have complex structure,
such as those arising from reformulating multi-stage stochastic
programming with risk measures as a classical stochastic
programming \cite{Rockafellar18}
(see \cref{sec:3} for further detail).
\end{enumerate}
We emphasize that, to the best of our knowledge:
\begin{itemize}
\item
The computational load issue \cref{cf:1} has been addressed
\emph{only} in the minimization setting.
\item
The tractability issue \cref{cf:2} does not appear to have been
addressed, even in the minimization setting;
\end{itemize}
see \cref{r:1} for further discussion.

To circumvent these drawbacks of the progressive hedging algorithm
\cref{e:PH}, it is therefore the goal of this work to propose a
novel operator splitting technique that results in a
block-activated decomposition algorithm with the following features
for solving \cref{prob:1}:
\begin{enumerate}[label={\normalfont\bfseries G\arabic*}]
\item
\label{G1}
To reduce the computational load, only a user-chosen subset $\Xi_n$
of scenarios is activated at every iteration.
We ask only that, within any $T+1$ consecutive iterations, where
$T$ is user-chosen and iteration-independent,
each scenario must be activated at least once.
\item
\label{G2}
For each activated scenario $\xi\in\Xi_n$,
our algorithm does not evaluate
$J_{\gamma\brk{A(\xi,\Cdot)+N_{C(\xi)}}}$, but rather
the resolvent $J_{\gamma A(\xi,\Cdot)}$ and the projector
$\proj_{C(\xi)}$ separately, which are more tractable.
\item
\label{G3}
It generates a sequence of implementable policies
$(x_n,v_n^*)_{n\in\NN}$, that is, $\set{x_n}_{n\in\NN}\subset\VV$
and $\set{v_n^*}_{n\in\NN}\subset\VV^{\perp}$,
that converges to a solution to \cref{prob:1}.
\end{enumerate}
In the current literature, there does not appear to be a technique
that can \emph{simultaneously} achieve \cref{G1,G2,G3} for the
general \cref{prob:1}.

The paper is organized as follows.
We discuss related work and our approach in \cref{sec:1bis}.
In \cref{sec:2} we present our algorithm, establish its convergence
properties, and discuss its features in detail.
\cref{sec:3} is devoted to a novel application:
By employing Rockafellar's reformulation technique
\cite{Rockafellar18},
we illustrate how our method leads to
a block-activated algorithm for multi-stage stochastic programming
with conditional value-at-risk objective functions
that, for each activated scenario,
\begin{itemize}
\item
separately employs the projector onto the constraint set,
\item
solves only a univariate equation involving the proximity operator
of the cost function, rather than challenging constrained
subproblems as required by existing methods.
\end{itemize}
Finally, to illustrate the computational gains arising from
feature \cref{G2}, we present in \cref{sec:5} numerical experiments
comparing the proposed algorithm with the progressive hedging
algorithm \cref{e:PH} of Rockafellar and Sun.

\newpage

\section{Related work and our approach}
\label{sec:1bis}

To facilitate our discussion, let us provide a special case of
\cref{prob:1}.

\begin{example}
\label{ex:1}
In \cref{prob:1},
suppose that each $A(\xi,\Cdot)$ is the subdifferential of a
lower semicontinuous convex function
$f(\xi,\Cdot)\colon\RR^d\to\RX$.
A standard convex analytic argument then shows that,
if $(\overline{x},\overline{v}^*)\in\HH\times\HH$ solves \cref{e:1},
then $\overline{x}$ solves the constrained multi-stage stochastic
programming with \emph{risk-neutral cost}
\begin{equation}
\label{e:19rn}
\minimize{\substack{x\in\VV\\
(\forall\xi\in\Xi)\,\,x(\xi)\in C(\xi)}}{
\EE\brk!{f\brk!{\Cdot,x(\Cdot)}}},
\end{equation}
where the expectation is taken with respect to the
probability space $\Xi$.
Furthermore, under a suitable constraint qualification condition,
one can show that $\overline{x}\in\HH$ solves
\cref{e:19rn} if and only if there exists
$\overline{v}^*\in\VV^{\perp}$ such that
$(\overline{x},\overline{v}^*)$ solves \cref{e:1}.
\end{example}

\begin{remark}
\label{r:1}
In the context of \cref{ex:1}:
\begin{enumerate}
\item
The computational load drawback
\cref{cf:1} of the classical progressive hedging algorithm
\cite{Rockafellar91} motivated the recent works
\cite{Bareilles20,Eckstein25} to develop block-activated methods
for solving the special case \cref{e:19rn} of \cref{prob:1}
\emph{under the implicit assumption \cref{e:impl}}.
There does not seem to exist a path to achieve
\cref{G2} from the operator splitting techniques of
\cite{Bareilles20,Eckstein25}.
\item
In solution frameworks for multi-stage stochastic optimization
\cite{Bareilles20,Eckstein25,Pennanen06,Rockafellar91},
the tractability issue \cref{cf:2} is often obscured by
incorporating the constraint sets $C(\xi)$ into the cost functions
$f(\xi,\Cdot)$, that is, by rewriting \cref{e:19rn} as
\begin{equation}
\minimize{x\in\VV}{\EE\brk!{g\brk!{\Cdot,x(\Cdot)}}},
\quad
\text{where}\,\,
(\forall\xi\in\Xi)\,\,
g(\xi,\Cdot)=f(\xi,\Cdot)+\iota_{C(\xi)}.
\end{equation}
\end{enumerate}
\end{remark}

Let us now sketch our path to achieve the goals
\cref{G1,G2,G3} set forth in the Introduction.
The first step is to identify a suitable class of abstract operator
splitting methods. To achieve \cref{G1},
one is tempted to consider stochastic operator
splitting methods (including the stochastic version of the
Douglas--Rachford algorithm;
see, e.g., \cite{Bareilles20,Combettes25,Combettes15}
and the references cited therein).
These stochastic schemes, however, rely on random activations of
scenarios and therefore do not allow ``fine-grained'' control
over which scenarios (and thus, cost operators) are being used.
We thus chose to adopt the \emph{projective splitting
algorithm} of \cite{Combettes18}, which features
\emph{deterministic} block-activation, that is, the users have
complete control of per-iteration computational load;
see \cite{Icassp22,Combettes24,Combettes18,Eckstein25} for further
discussions on and numerical illustrations of this recent class of
splitting algorithms.

Our choice was also motivated by the empirical evidence
in \cite{Eckstein25} for solving \cref{ex:1}, as we now describe.

\begin{remark}[projective hedging algorithm]
\label{r:6}
Based on the projective splitting algorithm of \cite{Eckstein17}
(a simplified version of \cite{Combettes18}),
the recent work \cite{Eckstein25} developed an
asynchronous deterministic block-activated method, called
\emph{projective hedging algorithm}, to solve \cref{e:19rn}
under the following assumptions:
\begin{itemize}
\item (Compactness of domain)
For each scenario $\xi\in\Xi$,
$\dom(f(\xi,\Cdot)+\iota_{C(\xi)})$ is compact;
see \cite[Assumption~1]{Eckstein25}.
\item (Reliability of non-strongly-convex subproblem solvers)
For each scenario $\xi\in\Xi$,
non-strongly-convex subproblems of the form
\begin{equation}
\minimize{\mathsf{x}\in C(\xi)}{
f(\xi,\mathsf{x})
+\scal!{\mathsf{w}}{
\brk!{\mathsf{x}_{[1]},\ldots,\mathsf{x}_{[N-1]}}}_2
+\frac{1}{2\gamma}\norm!{\brk!{\mathsf{x}_{[1]},\ldots,\mathsf{x}_{[N-1]}}-\mathsf{z}}_2^2
},
\end{equation}
can be solved reliably,
where $\mathsf{w}$ and $\mathsf{z}$
are given vectors in $\RR^{d_1}\times\cdots\times\RR^{d_{N-1}}$
and for a vector
$\mathsf{x}\in\RR^d=\RR^{d_1}\times\cdots\times\RR^{d_N}$,
$\mathsf{x}_{[k]}\in\RR^{d_k}$ denotes the $k$th block of
$\mathsf{x}$ (see also \cref{e:rdxk});
see line~7 of \cite[Algorithm~2]{Eckstein25}.
\end{itemize}
Large-scale numerical experiments were carried out
in \cite{Eckstein25} to demonstrate the advantages of the
projective hedging algorithm---particularly,
of deterministic block-activation---over
the classical progressive hedging algorithm \cite{Rockafellar91}.
More precisely, in parallel computational experiments
(48 to 6,000 processor cores),
\cite{Eckstein25} demonstrated a large-margin superiority of the
projective hedging algorithm in test problems
with two or five stages, and the number of scenarios ranging
from 20,000 to 1,000,000.
However, the approach of \cite{Eckstein25} was developed
specifically for the minimization setting and it is not clear how
it can be extended to solve \cref{prob:1} and to achieve
\cref{G2}.
\end{remark}

Having identified a suitable abstract operator splitting framework,
it remains to devise a technique for achieving \cref{G2}.
``Na\"{i}ve applications'' of this framework to
\cref{prob:1} often force two operators (namely,
$A(\xi,\Cdot)$ and $N_{C(\xi)}$) to be grouped together as a sum,
thereby preventing \cref{G2}. To overcome this difficulty, we
propose a novel reformulation of \cref{prob:1}, to which we apply a
less obvious form of \cite[Algorithm~12]{Combettes18}.

Finally, we note that our work here can be viewed as an extension
of the projective hedging algorithm to the general \cref{prob:1},
with the additional novelty of lifting its two restrictive
assumptions. In fact, in the minimization setting, our method will
be shown to reduce to an algorithm that strongly resembles the
projective hedging algorithm when one wishes to solve
the subproblems of evaluating
$J_{\gamma\brk{A(\xi,\Cdot)+N_{C(\xi)}}}$ at each iteration; see
\cref{r:5} for a detailed discussion.

\section{Solving \texorpdfstring{\cref{prob:1}}{the proposed problem}}
\label{sec:2}

We start by recalling several standard terminologies from
monotone operator theory.
Let $\HH$ be a Euclidean space with scalar product
$\scal{\Cdot}{\Cdot}$ and associated norm $\norm{\Cdot}$.
A set-valued operator $M\colon\HH\to 2^{\HH}$ is monotone if
\begin{equation}
  (\forall x\in\HH)(\forall y\in\HH)
  (\forall x^*\in Mx)(\forall y^*\in My)\quad
  \quad
  \scal{x-y}{x^*-y^*}\geq 0,
\end{equation}
and maximally monotone if it is monotone and
\begin{multline}
  \brk!{\forall (x,x^*)\in\HH\times\HH}\\
  \brk[s]!{\;
    (\forall y\in\HH)(\forall y^*\in My)\,\,
    \scal{x-y}{x^*-y^*}\geq 0\;}
  \quad\Longrightarrow\quad
  x^*\in Mx.
\end{multline}
The resolvent of a maximally monotone operator
$M\colon\HH\to 2^{\HH}$ is
\begin{equation}
  J_M=(\Id+M)^{-1}\colon\HH\to\HH.
\end{equation}
Next, let $f\colon\HH\to\RX$ be proper, lower semicontinuous, and
convex. The subdifferential of $f$ is the maximally monotone operator
\begin{equation}
  \partial f\colon\HH\to 2^{\HH}\colon
  x\mapsto
  \menge{x^*\in\HH}{(\forall y\in\HH)\,\,
    \scal{y-x}{x^*}+f(x)\leq f(y)},
\end{equation}
the proximity operator of $f$ is the mapping
$\prox_f\colon\HH\to\HH$ which maps every $x\in\HH$ to the unique
minimizer of the strongly convex function
$y\mapsto f(y)+\norm{x-y}^2/2$, and we have
\begin{equation}
  \prox_f=J_{\partial f}.
\end{equation}
In particular, given a nonempty closed convex subset $C$ of $\HH$,
the normal cone operator of $C$ is
$N_C=\partial\iota_C$ and
the projector onto $C$ is $\proj_C=\prox_{\iota_C}$,
where
\begin{equation}
  \iota_C\colon\HH\to\RX\colon x\mapsto
  \begin{cases}
    0,&\text{if}\,\,x\in C;\\
    \pinf,&\text{otherwise}
  \end{cases}
\end{equation}
is the indicator function of $C$.
For notations which are not explicitly defined here
and for background and complements on monotone operator theory and
convex analysis, we refer the reader to \cite{Livre1,Combettes24}.

The proposed algorithm and its convergence properties are
established in the following theorem.

\begin{theorem}
\label{t:1}
Consider the setting of \cref{prob:1}. Let $(\Xi_n)_{n\in\NN}$ be a
sequence of nonempty subsets of $\Xi$ such that
\begin{equation}
\label{e:3kfs}
\Xi_0=\Xi
\quad\text{and}\quad
(\exi T\in\NN)(\forall n\in\NN)\,\,
\bigcup_{j=n}^{n+T}\Xi_j=\Xi,
\end{equation}
let $\varepsilon\in\zeroun$, and for every $\xi\in\Xi$,
let $(\gamma_{\xi,n})_{n\in\NN}$ and $(\mu_{\xi,n})_{n\in\NN}$ be
sequences in $\intv{\varepsilon}{1/\varepsilon}$.
Additionally, let $(U(\xi))_{\xi\in\Xi}$ be a family of vector
subspaces of $\RR^d$ such that
\begin{equation}
\label{e:x2qw}
(\forall\xi\in\Xi)\quad
\ran\brk!{\Id-\proj_{C(\xi)}}\subset U(\xi),
\end{equation}
let $x_0\in\VV$, let $v_0^*\in\VV^{\perp}$, and let $x_0^*\in\HH$ be
such that $(\forall\xi\in\Xi)$ $x_0^*(\xi)\in U(\xi)$. Iterate
\begin{equation}
\label{e:algo}
\begin{array}{l}
\text{for $n=0,1,\ldots$}\\
\left\lfloor
\begin{array}{l}
\text{for each scenario $\xi\in\Xi_n$}\\
\left\lfloor
\begin{array}{l}
l_n^*(\xi)=x_n^*(\xi)+v_n^*(\xi)
\smallskip\\
a_n(\xi)=
J_{\gamma_{\xi,n}A(\xi,\Cdot)}\brk!{x_n(\xi)
-\gamma_{\xi,n}l_n^*(\xi)}
\smallskip\\
a_n^*(\xi)=\gamma_{\xi,n}^{-1}\brk!{x_n(\xi)-a_n(\xi)}-l_n^*(\xi)
\smallskip\\
b_n(\xi)=\proj_{C(\xi)}\brk!{x_n(\xi)+\mu_{\xi,n}x_n^*(\xi)}
\smallskip\\
b_n^*(\xi)=x_n^*(\xi)+\mu_{\xi,n}^{-1}\brk!{x_n(\xi)-b_n(\xi)}
\smallskip\\
u_n(\xi)=\proj_{U(\xi)}\brk!{b_n(\xi)-a_n(\xi)}
\end{array}
\right.\\
\text{for each scenario $\xi\in\Xi\smallsetminus\Xi_n$}\\
\left\lfloor
\begin{array}{l}
a_n(\xi)=a_{n-1}(\xi);\,\,
a_n^*(\xi)=a_{n-1}^*(\xi)\\
b_n(\xi)=b_{n-1}(\xi);\,\,
b_n^*(\xi)=b_{n-1}^*(\xi)\\
u_n(\xi)=u_{n-1}(\xi)
\end{array}
\right.\\
t_n^*=\proj_{\VV}(a_n^*+b_n^*)\\
t_n={-}\proj_{\VV^{\perp}}a_n\\
\tau_n=\norm{t_n^*}_{\HH}^2+\norm{u_n}_{\HH}^2+\norm{t_n}_{\HH}^2\\
\text{if $\tau_n>0$}\\
\left\lfloor
\begin{array}{l}
\kappa_n=\scal{x_n}{t_n^*}_{\HH}-\scal{a_n}{a_n^*}_{\HH}
+\scal{u_n}{x_n^*}_{\HH}-\scal{b_n}{b_n^*}_{\HH}
+\scal{t_n}{v_n^*}_{\HH}\\
\lambda_n\in\intv{\varepsilon}{2-\varepsilon}\\
\theta_n=\lambda_n\max\set{\kappa_n,0}/\tau_n
\end{array}
\right.\\
\text{else}\\
\left\lfloor
\begin{array}{l}
\theta_n=0
\end{array}
\right.\\
x_{n+1}=x_n-\theta_nt_n^*;\;
x_{n+1}^*=x_n^*-\theta_nu_n;\;
v_{n+1}^*=v_n^*-\theta_nt_n.
\end{array}
\right.
\end{array}
\end{equation}
Then there exists a solution
$(\overline{x},\overline{v}^*)\in\VV\times\VV^{\perp}$ to
the multi-stage stochastic equilibrium problem \cref{e:1} such that
$x_n\to\overline{x}$ and
$v_n^*\to\overline{v}^*$ in $\HH$.
Furthermore, for every $n\in\NN$,
$x_n$ is an \emph{implementable policy}
(in the terminology of Rockafellar and Wets \cite{Rockafellar91}),
that is, $x_n\in\VV$ and $v_n^*\in\VV^{\perp}$.
\end{theorem}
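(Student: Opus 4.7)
The plan is to recast \cref{e:1} as a three-operator monotone inclusion on $\HH$ and to recognize \cref{e:algo} as an instance of the block-activated projective splitting framework of \cite{Combettes18}, whose convergence theorem would then directly yield the asserted statements. Concretely, I would define scenario-wise operators on $\HH$ by $(\mathbf{A}x)(\xi)=A(\xi,x(\xi))$ and $(\mathbf{N}x)(\xi)=N_{C(\xi)}x(\xi)$, both maximally monotone via the standard separable-product construction, together with the maximally monotone $N_{\VV}$. Then \cref{e:1} is equivalent to finding $x\in\HH$ such that $0\in\mathbf{A}x+\mathbf{N}x+N_{\VV}x$, the pair $(x,v^*)$ of the problem corresponding to $x$ and to the $N_{\VV}$-component of the associated decomposition of $0$.

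Next I would interpret the inner loop of \cref{e:algo} as the construction of two graph points and a separating halfspace for the associated Kuhn--Tucker set. The resolvent identity yields $a_n^*(\xi)\in A(\xi,a_n(\xi))$ from $a_n^*(\xi)=\gamma_{\xi,n}^{-1}(x_n(\xi)-a_n(\xi))-l_n^*(\xi)$, and the projection step yields $b_n^*(\xi)\in N_{C(\xi)}b_n(\xi)$ via the standard characterization of $\proj_{C(\xi)}$ together with positive homogeneity of the normal cone. The aggregated quantities $t_n^*$, $t_n$, $u_n$, and the scalars $\kappa_n$, $\tau_n$, $\theta_n$ should then realize the halfspace separator of \cite{Combettes18} between the current iterate $(x_n,x_n^*,v_n^*)$ and the Kuhn--Tucker set, while the block-activation hypothesis \cref{e:3kfs} exactly matches the asynchronous selection rule of that reference.

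By induction on $n$ I would then establish the invariants $x_n\in\VV$ (since $t_n^*=\proj_{\VV}(a_n^*+b_n^*)\in\VV$), $v_n^*\in\VV^{\perp}$ (since $t_n={-}\proj_{\VV^{\perp}}a_n\in\VV^{\perp}$), and $x_n^*(\xi)\in U(\xi)$ for every $\xi\in\Xi$ (since $u_n(\xi)=\proj_{U(\xi)}(b_n(\xi)-a_n(\xi))\in U(\xi)$); the hypothesis $\ran(\Id-\proj_{C(\xi)})\subset U(\xi)$ additionally guarantees that the increment $b_n^*(\xi)-x_n^*(\xi)=\mu_{\xi,n}^{-1}(x_n(\xi)-b_n(\xi))$ stays in $U(\xi)$, keeping the dual iterates compatible with the range of $N_{C(\xi)}$. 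Invoking the convergence theorem of \cite{Combettes18} --- with $\gamma_{\xi,n}$, $\mu_{\xi,n}$, and $\lambda_n$ lying in the prescribed ranges and the Kuhn--Tucker set nonempty by the existence assumption --- then delivers $x_n\to\overline{x}$ and $v_n^*\to\overline{v}^*$ with $(\overline{x},\overline{v}^*)$ solving \cref{e:1}.

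The main obstacle will be the bookkeeping required to align the variables of \cref{e:algo} with those of \cite{Combettes18}: specifically, verifying that the \emph{separate} use of $J_{\gamma_{\xi,n}A(\xi,\Cdot)}$ and $\proj_{C(\xi)}$ combined with the projections onto $\VV$ and $\VV^{\perp}$ does reproduce the halfspace separator for the three-operator inclusion once everything is lifted to the appropriate primal-dual product space, and that the auxiliary subspaces $U(\xi)$ --- a refinement rarely emphasized in the literature --- fit coherently into this scheme so as to avoid spurious dual components outside $\ran N_{C(\xi)}$.
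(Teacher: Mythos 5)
Your plan is essentially the paper's proof: the paper likewise lifts \cref{e:1} to the primal--dual (Kuhn--Tucker) system for the three-operator inclusion involving $A(\xi,\Cdot)$, $N_{C(\xi)}$, and $N_{\VV}$, verifies that \cref{e:algo} is the block-activated projective splitting algorithm of \cite{Combettes18} restricted to the subspace $\mathsf{K}=\VV\times\UU\times\VV^{\perp}$ (which yields the invariants $x_n\in\VV$, $v_n^*\in\VV^{\perp}$, $x_n^*(\xi)\in U(\xi)$ and the simplification of the $N_\VV$-block), and invokes its convergence theorem. The one nuance worth noting is that the paper uses hypothesis \cref{e:x2qw} to show that every Kuhn--Tucker point has its dual component in $\UU$, i.e.\ the whole solution set lies in $\mathsf{K}$ --- which is exactly what the convergence theorem of \cite{Combettes18} needs when the separator is projected onto $\mathsf{K}$ --- rather than merely to keep the dual iterates in $U(\xi)$ as you suggest.
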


\begin{proof}
We show that \cref{e:algo} is a realization of
\cite[Algorithm~12]{Combettes18} to the system of monotone
inclusions
\begin{equation}
\label{e:2}
\text{find $x\in\HH$ and $(x^*,v^*)\in\HH\times\HH$ such that}\,\,
\begin{cases}
(\forall\xi\in\Xi)\,\,
\begin{cases}
{-}x^*(\xi)-v^*(\xi)\in A\brk!{\xi,x(\xi)}\\
x^*(\xi)\in N_{C(\xi)}\brk!{x(\xi)}
\end{cases}
\\
v^*\in N_{\VV}x
\end{cases}
\end{equation}
with the choice
\begin{equation}
\mathsf{K}=\VV\times\UU\times\VV^{\perp},
\quad\text{where}\,\,
\UU=\menge{x\in\HH}{(\forall\xi\in\Xi)\,\,x(\xi)\in U(\xi)},
\end{equation}
and the conclusion will then follow from
\cite[Theorem~13]{Combettes18}.
To do so, we first put \cref{e:2} in the form of
\cite[Problem~1]{Combettes18}, to which
\cite[Algorithm~12]{Combettes18} is applicable.
Since $\Xi$ is a finite set, we assume, for presentation purposes,
that
\begin{equation}
\label{e:inde}
\text{$\Xi=\set{1,\ldots,m}$ for some
$m\in\NN\smallsetminus\set{0}$}.
\end{equation}
For every $\xi\in\Xi$, let $\HS_{\xi}$ be the Euclidean space
obtained by equipping the standard vector space $\RR^d$ with the
scalar product
\begin{equation}
\scal{\Cdot}{\Cdot}_{\HS_{\xi}}=\pi(\xi)\scal{\Cdot}{\Cdot}_2.
\end{equation}
To make the connection between \cref{e:2} and
\cite[Problem~1]{Combettes18} clear,
we identify $\HH$ with the Hilbert direct sum
$\bigoplus_{\xi\in\Xi}\HS_{\xi}$ and identify a mapping
$x\in\HH$ with the tuple
$(x(1),\ldots,x(m))\in\bigoplus_{\xi\in\Xi}\HS_{\xi}$
consisting of its values.
Now set
\begin{equation}
\label{e:kbkx}
K=\Xi\cup\set{m+1},
\quad
B_{m+1}=N_{\VV}\colon\HH\to 2^{\HH},
\quad\text{and}\quad
(\forall\xi\in\Xi)\,\,
B_{\xi}=N_{C(\xi)}\colon\HS_{\xi}\to 2^{\HS_{\xi}},
\end{equation}
and define a family $(L_{k,\xi})_{(k,\xi)\in K\times\Xi}$
of linear operators as follows:
\begin{equation}
(\forall\eta\in\Xi)(\forall\xi\in\Xi)\quad
L_{\eta,\xi}=
\begin{cases}
\Id\colon\HS_{\xi}\to\HS_{\xi},&\text{if}\,\,\eta=\xi;\\
0\colon\HS_{\xi}\to\HS_{\eta},
&\text{if}\,\,\eta\neq\xi,
\end{cases}
\end{equation}
and, for every $\xi\in\Xi$, $L_{m+1,\xi}\colon\HS_{\xi}\to\HH$
is defined via
\begin{equation}
(\forall\mathsf{x}\in\HS_{\xi})\quad
L_{m+1,\xi}\mathsf{x}\colon\Xi\to\RR^d\colon
\eta\mapsto
\begin{cases}
\mathsf{x},&\text{if}\,\,\eta=\xi;\\
\mathsf{0},&\text{if}\,\,\eta\neq\xi.
\end{cases}
\end{equation}
Then the operators $(B_k)_{k\in K}$ are maximally
monotone\footnote{For each scenario $\xi\in\Xi$,
the normal cone of $C(\xi)$ with respect to the $\ell^2$
scalar product on $\RR^d$ coincides with that with respect to
the scalar product $\scal{\Cdot}{\Cdot}_{\HS_{\xi}}$.}
\cite[Example~20.26]{Livre1}, and we have
\begin{equation}
\label{e:832y}
(\forall k\in K)(\forall x\in\HH)\quad
\sum_{\xi\in\Xi}L_{k,\xi}\brk!{x(\xi)}=
\begin{cases}
x(k),&\text{if}\,\,k\in\Xi;\\
x,&\text{if}\,\,k=m+1
\end{cases}
\end{equation}
and
\begin{equation}
\label{e:832z}
(\forall\xi\in\Xi)(\forall x^*\in\HH)(\forall v^*\in\HH)\quad
\Sum_{k\in\Xi}L_{k,\xi}^*\brk!{x^*(k)}+L_{m+1,\xi}^*v^*
=x^*(\xi)+v^*(\xi),
\end{equation}
where $L_{k,\xi}^*$ denotes the adjoint of $L_{k,\xi}$.
Therefore, using the fact that $N_{\VV}^{-1}=N_{\VV^{\perp}}$
\cite[Example~6.43]{Livre1}, we rewrite \cref{e:2} as
\begin{multline}
\label{e:2bis}
\text{find $\brk!{x(\xi)}_{\xi\in\Xi}\in\bigoplus_{\xi\in\Xi}\HS_{\xi}$
and
$\brk!{\brk!{x^*(\xi)}_{\xi\in\Xi},v^*}\in\brk3{\bigoplus_{\xi\in\Xi}\HS_{\xi}}\oplus\HH$
such that}\\
\begin{cases}
(\forall\xi\in\Xi)\,\,
{-}\Sum_{k\in\Xi}L_{k,\xi}^*\brk!{x^*(k)}-L_{m+1,\xi}^*v^*
\in A\brk!{\xi,x(\xi)}
\smallskip\\
(\forall k\in\Xi)\,\,
\Sum_{\xi\in\Xi}L_{k,\xi}\brk!{x(\xi)}
\in B_k^{-1}\brk!{x^*(k)}
\smallskip\\
\Sum_{\xi\in\Xi}L_{m+1,\xi}\brk!{x(\xi)}
\in B_{m+1}^{-1}v^*,
\end{cases}
\end{multline}
which is precisely \cite[Problem~1]{Combettes18} with the operators
$(A_i)_{i\in I}$ there being $(A(\xi,\Cdot))_{\xi\in\Xi}$.
(For clarity, we point out that, for each scenario $\xi\in\Xi$,
$A(\xi,\Cdot)$ is also maximally monotone with respect to
the scalar product $\scal{\Cdot}{\Cdot}_{\HS_{\xi}}$.)
Next, denote by $\boldsymbol{\ZZ}$ the set of solutions to
\cref{e:2}. Since
\begin{equation}
N_{\VV}\colon\HH\to 2^{\HH}\colon
x\mapsto
\begin{cases}
\VV^{\perp},&\text{if}\,\,x\in\VV;\\
\emp,&\text{otherwise},
\end{cases}
\end{equation}
we deduce that
\begin{equation}
\label{e:r46z}
\brk!{\forall(x,v^*)\in\HH\times\HH}\quad
\text{$(x,v^*)$ solves \cref{e:1}}
\quad\Longleftrightarrow\quad
(\exi x^*\in\HH)\,\,
\text{$(x,x^*,v^*)$ solves \cref{e:2}}.
\end{equation}
In turn, since \cref{e:1} has a solution,
$\boldsymbol{\ZZ}\neq\emp$.
At the same time, for every $(x,x^*,v^*)\in\boldsymbol{\ZZ}$
and every $\xi\in\Xi$, it results from \cref{e:2} and the identity
$\proj_{C(\xi)}=(\Id+N_{C(\xi)})^{-1}$ that
\begin{equation}
x(\xi)=\proj_{C(\xi)}\brk!{x(\xi)+x^*(\xi)},
\end{equation}
which leads to
\begin{equation}
x^*(\xi)
=\brk!{x(\xi)+x^*(\xi)}-\proj_{C(\xi)}\brk!{x(\xi)+x^*(\xi)}
\in\ran\brk!{\Id-\proj_{C(\xi)}}
\subset U(\xi),
\end{equation}
where the last inclusion is the condition \cref{e:x2qw}.
Hence, $\boldsymbol{\ZZ}$ is contained in the vector subspace
$\VV\times\UU\times\VV^{\perp}$ of
$\HH\oplus\HH\oplus\HH$.
Thus, applying \cite[Algorithm~12]{Combettes18} to the form
\cref{e:2bis} of \cref{e:2}
with:
\begin{itemize}
\item
the vector subspace $\mathsf{K}=\VV\times\UU\times\VV^{\perp}$,
\item
the activation blocks $(\forall n\in\NN)$ $I_n=\Xi_n$ and
$K_n=\Xi_n\cup\set{m+1}$,
\item
and the parameters
\begin{equation}
\begin{cases}
(\forall n\in\NN)\,\,\mu_{m+1,n}=1\\
(\forall\xi\in\Xi)(\forall n\in\NN)\,\,c_{\xi}(n)=n\\
(\forall k\in K)(\forall n\in\NN)\,\,d_k(n)=n,
\end{cases}
\end{equation}
\end{itemize}
and noticing that, in our setting:
\begin{itemize}
\item
the dual variables $(v_{k,n}^*)_{k\in K}$ in
\cite[Algorithm~12]{Combettes18} correspond to
$\brk{x_n^*(1),\ldots,x_n^*(m),v_n^*}$,
\item
the variables $(b_{k,n})_{k\in K}$ in
\cite[Algorithm~12]{Combettes18}
correspond to $(b_n(1),\ldots,b_n(m),p_n)$,
\end{itemize}
we obtain an algorithm of which the iteration $n$ update is
\begin{equation}
\label{e:nupd}
\begin{array}{l}
\text{for each scenario $\xi\in\Xi_n$}\\
\left\lfloor
\begin{array}{l}
l_n^*(\xi)
=\sum_{k\in\Xi}L_{k,\xi}^*\brk!{x_n^*(k)}+L_{m+1,\xi}^*v_n^*
\smallskip\\
a_n(\xi)=J_{\gamma_{\xi,n}A(\xi,\Cdot)}
\brk!{x_n(\xi)-\gamma_{\xi,n}l_n^*(\xi)}
\smallskip\\
a_n^*(\xi)=\gamma_{\xi,n}^{-1}\brk!{x_n(\xi)-a_n(\xi)}-l_n^*(\xi)
\end{array}
\right.\\
\text{for each scenario $\xi\in\Xi\smallsetminus\Xi_n$}\\
\left\lfloor
\begin{array}{l}
a_n(\xi)=a_{n-1}(\xi);\,\,
a_n^*(\xi)=a_{n-1}^*(\xi)
\end{array}
\right.
\smallskip\\
\text{for each $k\in K_n=\Xi_n\cup\set{m+1}$}\\
\left\lfloor
\begin{array}{l}
\text{if $k\in\Xi_n$}\\
\left\lfloor
\begin{array}{l}
l_n(k)=\sum_{\xi\in\Xi}L_{k,\xi}\brk!{x_n(\xi)}
\smallskip\\
b_n(k)=\proj_{C(k)}\brk!{l_n(k)+\mu_{k,n}x_n^*(k)}
\smallskip\\
b_n^*(k)=x_n^*(k)+\mu_{k,n}^{-1}\brk!{l_n(k)-b_n(k)}
\end{array}
\right.
\smallskip\\
\text{if $k=m+1$}\\
\left\lfloor
\begin{array}{l}
l_n(k)=\sum_{\xi\in\Xi}L_{k,\xi}\brk!{x_n(\xi)}
\smallskip\\
p_n=\proj_{\VV}\brk{l_n(k)+v_n^*}
\smallskip\\
p_n^*=v_n^*+l_n(k)-p_n
\end{array}
\right.\smallskip
\end{array}
\right.
\smallskip\\
\text{for each $k\in K\smallsetminus K_n=\Xi\smallsetminus\Xi_n$}\\
\left\lfloor
\begin{array}{l}
b_n(k)=b_{n-1}(k);\,\,
b_n^*(k)=b_{n-1}^*(k)
\end{array}
\right.
\smallskip\\
\begin{aligned}
&\textstyle
\brk2{\brk!{t_n^*(\xi)}_{\xi\in\Xi},
\brk!{u_n(\xi)}_{\xi\in\Xi},t_n}
=\proj_{\mathsf{K}}\bigg(
\brk2{a_n^*(\xi)+\sum_{k\in\Xi}L_{k,\xi}^*b_n^*(k)
+L_{m+1,\xi}^*p_n^*}_{\xi\in\Xi},
\\
&\textstyle\qquad\qquad\qquad\qquad
\brk!{b_n(k)-\sum_{\xi\in\Xi}L_{k,\xi}a_n(\xi)}_{k\in\Xi},
p_n-\sum_{\xi\in\Xi}L_{m+1,\xi}a_n(\xi)
\bigg)
\end{aligned}
\smallskip\\
\tau_n=\sum_{\xi\in\Xi}\brk!{\norm{t_n^*(\xi)}_{\HS_{\xi}}^2
+\norm{u_n(\xi)}_{\HS_{\xi}}^2}
+\norm{t_n}_{\HH}^2
\smallskip\\
\text{if $\tau_n>0$}\\
\left\lfloor
\begin{array}{l}
\begin{aligned}
&\textstyle
\kappa_n=
\sum_{\xi\in\Xi}\brk!{
\scal{x_n(\xi)}{t_n^*(\xi)}_{\HS_{\xi}}
-\scal{a_n(\xi)}{a_n^*(\xi)}_{\HS_{\xi}}}
\\
&\textstyle\quad\qquad
+\sum_{\xi\in\Xi}\brk!{\scal{u_n(\xi)}{x_n^*(\xi)}_{\HS_{\xi}}
-\scal{b_n(\xi)}{b_n^*(\xi)}_{\HS_{\xi}}}
+\scal{t_n}{v_n^*}_{\HH}
-\scal{p_n}{p_n^*}_{\HH}
\end{aligned}
\smallskip\\
\lambda_n\in\intv{\varepsilon}{2-\varepsilon}
\smallskip\\
\theta_n=\lambda_n\max\set{\kappa_n,0}/\tau_n
\end{array}
\right.\\
\text{else}\\
\left\lfloor
\begin{array}{l}
\theta_n=0
\end{array}
\right.\\
x_{n+1}=x_n-\theta_nt_n^*;\;
x_{n+1}^*=x_n^*-\theta_nu_n;\;
v_{n+1}^*=v_n^*-\theta_nt_n.
\end{array}
\end{equation}
Fix temporarily $n\in\NN$ and let us simplify \cref{e:nupd}.
It results from \cref{e:832y} that
\begin{equation}
\label{e:lnks}
(\forall k\in K_n)\quad
l_n(k)=
\begin{cases}
x_n(k),&\text{if}\,\,k\in\Xi_n;\\
x_n,&\text{if}\,\,k=m+1,
\end{cases}
\end{equation}
and that, by combining with \cite[Proposition~29.3]{Livre1}
and the fact that $\mathsf{K}=\VV\times\UU\times\VV^{\perp}$,
\begin{equation}
\label{e:utsz}
\brk[s]!{\,(\forall\xi\in\Xi)\,\,
u_n(\xi)=\proj_{U(\xi)}\brk!{b_n(\xi)-a_n(\xi)}\,}
\quad\text{and}\quad
t_n
=\proj_{\VV^{\perp}}(p_n-a_n)
={-}\proj_{\VV^{\perp}}a_n,
\end{equation}
where the last identity follows from the fact that $p_n\in\VV$.
Likewise, we derive from \cref{e:832z} that
\begin{equation}
\label{e:ekcz}
(\forall\xi\in\Xi_n)\quad
l_n^*(\xi)=x_n^*(\xi)+v_n^*(\xi)
\end{equation}
and that
\begin{equation}
t_n^*
=\proj_{\VV}(a_n^*+b_n^*+p_n^*)
=\proj_{\VV}(a_n^*+b_n^*),
\end{equation}
where the last identity follows from
\begin{equation}
\label{e:pns3}
p_n^*
=v_n^*+l_n(k)-p_n
=v_n^*+l_n(k)-\proj_{\VV}\brk{l_n(k)+v_n^*}
=\proj_{\VV^{\perp}}\brk{l_n(k)+v_n^*}
\in\VV^{\perp}.
\end{equation}
Thus, an induction argument shows that
\begin{equation}
\label{e:1037}
(x_n,x_n^*,v_n^*)\in\mathsf{K}=\VV\times\UU\times\VV^{\perp},
\end{equation}
and $x_n$ is therefore an implementable policy.
Hence, in view of \cref{e:lnks}, it follows from \cref{e:nupd} that
$p_n=x_n\in\VV$ and from \cref{e:pns3} that
$p_n^*=v_n^*\in\VV^{\perp}$.
At the same time, using \cref{e:utsz}, \cref{e:nupd},
and an induction argument, we obtain
\begin{equation}
(\forall n\in\NN)(\forall\xi\in\Xi)\quad
u_n(\xi)=
\begin{cases}
\proj_{U(\xi)}\brk!{b_n(\xi)-a_n(\xi)},&
\text{if}\,\,\xi\in\Xi_n;\\
u_{n-1}(\xi),&\text{otherwise}.
\end{cases}
\end{equation}
Further, since $\HH=\bigoplus_{\xi\in\Xi}\HS_{\xi}$
and since $\set{p_n}_{n\in\NN}\subset\VV$
and $\set{p_n^*}\subset\VV^{\perp}$,
the computations of $(\tau_n)_{n\in\NN}$ and
$(\kappa_n)_{n\in\NN}$ in \cref{e:nupd}
can be rewritten as
\begin{equation}
(\forall n\in\NN)\quad
\begin{cases}
\tau_n=\norm{t_n^*}_{\HH}^2+\norm{u_n}_{\HH}^2+\norm{t_n}_{\HH}^2\\
\kappa_n=
\scal{x_n}{t_n^*}_{\HH}-\scal{a_n}{a_n^*}_{\HH}
+\scal{u_n}{x_n^*}_{\HH}-\scal{b_n}{b_n^*}_{\HH}
+\scal{t_n}{v_n^*}_{\HH}.
\end{cases}
\end{equation}
Altogether, upon rearranging the computation steps,
we see that \cref{e:nupd} simplifies to \cref{e:algo},
and we have thus shown that \cref{e:algo} is a realization of
\cite[Algorithm~12]{Combettes18} to \cref{e:2}.
Hence, \cite[Theorem~13]{Combettes18} asserts that there exists
$(\overline{x},\overline{x}^*,\overline{v}^*)\in\boldsymbol{\ZZ}$
such that $x_n\to\overline{x}$ and $v_n^*\to\overline{v}^*$,
while \cref{e:r46z} ensures that
$(\overline{x},\overline{v}^*)$ is a solution to \cref{e:1}.
\end{proof}

\begin{remark}
\label{r:2}
Let us comment on the core features of algorithm~\cref{e:algo}.
\begin{enumerate}
\item
\label{r:2i}
At each iteration $n$, only a user-chosen subset $\Xi_n$ of
scenarios needs to be activated.
The user can proactively decide which scenarios to activate based
on the available computational resource.
To guarantee convergence, we impose only
\cref{e:3kfs}, which roughly means that no scenario should be
``forgotten'' indefinitely long.
\item
\label{r:2ii}
For each activated scenario $\xi\in\Xi_n$, the computation effort
is split into separate evaluations of
the resolvent $J_{\gamma A(\xi,\Cdot)}$ and projectors
$\proj_{C(\xi)}$ and $\proj_{U(\xi)}$, which are often more
tractable. Such decomposition requires additional variables, the
cost of which can be justified by computational gains from avoiding
the need to iteratively solve complex constrained subproblems;
see \cref{sec:3} for further discussion.
\item
\label{r:2ii-bis}
The proposed method guarantees sure convergence
of the sequence $(x_n,v_n^*)_{n\in\NN}$ and not just
almost-sure convergence as in stochastic operator splitting methods;
see, e.g., \cite{Bareilles20,Combettes25,Combettes15} and the
references cited therein.
\item
\label{r:2iii}
Parallel to the discussion in \cite[Remark~5]{Combettes18},
the family $(U(\xi))_{\xi\in\Xi}$ may provide
computational and theoretical advantages.
For instance, if for some scenario $\xi\in\Xi$,
$C(\xi)$ is a vector subspace of $\RR^d$,
one can choose $U(\xi)=C(\xi)^{\perp}$,
which can reduce computational complexity by working in a
lower-dimensional space.
When a constraint set $C(\xi)$ has no special structure, one can
simply choose $U(\xi)=\RR^d$. In both examples, the condition
\cref{e:x2qw} is fulfilled.
See \cref{r:5} below for further discussion.
\item\label{r:2iv}
For the reader's convenience, we recall
the following formula from \cite{Rockafellar91} for evaluating the
projection $p=\proj_{\VV}x$ of a given $x\in\HH$ onto the
nonanticipativity subspace $\VV$:
\begin{equation}
\brk!{\forall k\in\set{1,\ldots,N}}
(\forall\xi\in\Xi)\quad
p_{[k]}(\xi)=\frac{\Sum_{\eta\sim_k\xi}\pi(\eta)x_{[k]}(\eta)}{
\Sum_{\eta\sim_k\xi}\pi(\eta)}.
\end{equation}
\end{enumerate}
\end{remark}

Continuing with the discussion in \cref{sec:1bis}
and \cref{r:2}\cref{r:2iii}, we next discuss in detail the
connection between our work and the projective hedging algorithm of
\cite{Eckstein25}.

\begin{remark}[connection with projective hedging algorithm]
\label{r:5}
In connection with \cref{r:2}\cref{r:2iii},
we now demonstrate a theoretical advantage of
the vector subspaces $(U(\xi))_{\xi\in\Xi}$ in \cref{t:1}
by showing that \cref{e:algo} encompasses a variant of
the projective hedging algorithm \cite[Algorithm~2]{Eckstein25}.
To this end, assume that $(\forall\xi\in\Xi)$ $C(\xi)=\RR^d$.
(This assumption covers the situation where
the resolvents of the sum $A(\xi,\Cdot)+N_{C(\xi)}$ can be reliably
evaluated and the constraint set $C(\xi)$ is therefore ``absorbed''
into the operator $A(\xi,\Cdot)$, as done in, e.g.
\cite{Bareilles20,Eckstein25}, in the optimization setting.)
We then choose
\begin{equation}
(\forall\xi\in\Xi)\quad
\begin{cases}
U(\xi)=\set{0}=\ran\brk!{\Id-\proj_{C(\xi)}}\\
x_0^*(\xi)=0\\
(\forall n\in\NN)\,\,\mu_{\xi,n}=1.
\end{cases}
\end{equation}
Hence $(\forall n\in\NN)$ $u_n=x_n^*=0$.
In turn, \cref{e:algo} reduces to
\begin{equation}
\label{e:nnpr}
\begin{array}{l}
\text{for $n=0,1,\ldots$}\\
\left\lfloor
\begin{array}{l}
\text{for each scenario $\xi\in\Xi_n$}\\
\left\lfloor
\begin{array}{l}
a_n(\xi)=J_{\gamma_{\xi,n}A(\xi,\Cdot)}\brk!{x_n(\xi)
-\gamma_{\xi,n}v_n^*(\xi)}
\smallskip\\
a_n^*(\xi)=\gamma_{\xi,n}^{-1}\brk!{x_n(\xi)-a_n(\xi)}
-v_n^*(\xi)
\end{array}
\right.\\
\text{for each scenario $\xi\in\Xi\smallsetminus\Xi_n$}\\
\left\lfloor
\begin{array}{l}
a_n(\xi)=a_{n-1}(\xi);\,\,
a_n^*(\xi)=a_{n-1}^*(\xi)
\end{array}
\right.\\
t_n^*=\proj_{\VV}a_n^*\\
t_n={-}\proj_{\VV^{\perp}}a_n\\
\tau_n=\norm{t_n^*}_{\HH}^2
+\norm{t_n}_{\HH}^2\\
\text{if $\tau_n>0$}\\
\left\lfloor
\begin{array}{l}
\kappa_n=\scal{x_n}{t_n^*}_{\HH}-\scal{a_n}{a_n^*}_{\HH}
+\scal{t_n}{v_n^*}_{\HH}\\
\lambda_n\in\intv{\varepsilon}{2-\varepsilon}\\
\theta_n=\lambda_n\max\set{\kappa_n,0}/\tau_n
\end{array}
\right.\\
\text{else}\\
\left\lfloor
\begin{array}{l}
\theta_n=0
\end{array}
\right.\\
x_{n+1}=x_n-\theta_nt_n^*;\;
v_{n+1}^*=v_n^*-\theta_nt_n.
\end{array}
\right.
\end{array}
\end{equation}
We observe that the computation of each $a_n(\xi)$ is exactly the same
as in the progressive hedging algorithm \cref{e:PH}.
Now assume that each $A(\xi,\Cdot)$ is the subdifferential of a lower
semicontinuous convex function $f(\xi,\Cdot)\colon\RR^d\to\RX$.
Then \cref{e:nnpr} strongly resembles the projective
hedging algorithm \cite[Algorithm~2]{Eckstein25},
with the following subtle differences:
\begin{itemize}
\item Evaluating $a_n(\xi)$ in \cref{e:nnpr} is equivalent to
computing the proximity operator
\begin{equation}
\prox_{\gamma_{\xi,n}f(\xi,\Cdot)}\brk!{x_n(\xi)-\gamma_{\xi,n}v_n^*(\xi)},
\end{equation}
which admits closed-form expressions for a wide array of proper
lower semicontinuous convex functions
\cite{Livre1,Combettes24}.
Here \cref{e:nnpr} requires full storage of decision vectors from
all $N$ stages, as in the classical progressive hedging algorithm.
\item The projective hedging algorithm does not require retaining the
last-stage variables between iterations.
However, it must solve generally non-strongly-convex subproblems
of the form
\begin{equation}
\minimize{\mathsf{x}\in\RR^d}{
f(\xi,\mathsf{x})
+\scal!{\mathsf{w}}{
\brk!{\mathsf{x}_{[1]},\ldots,\mathsf{x}_{[N-1]}}}_2
+\frac{1}{2\gamma}\norm!{
\brk!{\mathsf{x}_{[1]},\ldots,\mathsf{x}_{[N-1]}}
-\mathsf{z}}_2^2
},
\end{equation}
where $\mathsf{x}_{[k]}\in\RR^{d_k}$ denotes the $k$th-stage
decision block of $\mathsf{x}\in\RR^d$.
Such subproblems generally do not admit closed-form expressions for
their solutions, and therefore must be solved via iterative
methods. Additionally, they are clearly of different nature than
those of classical progressive hedging, contrary to the claim made
in the paragraph following \cite[Eqs.~(24) and (25)]{Eckstein25}.
\end{itemize}
\end{remark}

We end this section with potential further developments.

\begin{remark}[incorporating asynchrony]
For expository clarity, we have presented in \cref{t:1} only the
synchronous version of our algorithm.
It is evident from the proof that, since our approach is
based on the asynchronous block-activated method of
\cite{Combettes18}, asynchrony can be incorporated into
\cref{e:algo} in a straightforward manner.
\end{remark}

\begin{remark}[decomposition of cost operators]
We can extend \cref{prob:1} to handle more complex structures as
follows. Decompose each operator $A(\xi,\Cdot)$ as
\begin{multline}
A(\xi,\Cdot)=
T(\xi,\Cdot)+R(\xi,\Cdot)+Q(\xi,\Cdot),\\
\text{where}\,\,
\begin{cases}
\text{$T(\xi,\Cdot)\colon\RR^d\to 2^{\RR^d}$
is maximally monotone}\\
\text{$R(\xi,\Cdot)\colon\RR^d\to\RR^d$
is cocoercive}\\
\text{$Q(\xi,\Cdot)\colon\RR^d\to\RR^d$
is monotone and Lipschitzian}.
\end{cases}
\end{multline}
Then, upon combining the argument in the proof of \cref{t:1} and
the saddle projective splitting of \cite[Section~10]{Combettes24},
one can devise an asynchronous block-activated algorithm
for solving \cref{prob:1} which, at each iteration and for each
activated scenario $\xi\in\Xi$, evaluates
the resolvent $J_{\gamma T(\xi,\Cdot)}$,
the cocoercive operator
$R(\xi,\Cdot)$ (via an explicit Euler step), and the projector
$\proj_{C(\xi)}$ once,
while the Lipschitzian operator $Q(\xi,\Cdot)$ is evaluated twice
via an explicit Euler step.
\end{remark}

\newpage

\section{Application: Solving multi-stage stochastic programming
  with \texorpdfstring{$\cvar$}{CVaR} objectives}
\label{sec:3}

In this section we demonstrate how the algorithm \cref{e:algo} can be
employed to solve the following multi-stage risk-averse stochastic
programming with conditional value-at-risk objectives.

\begin{problem}
  \label{prob:2}
  Consider \cref{m:1}. Let $\alpha\in\zeroun$ and denote by
  $\cvar_{\alpha}$
  the \emph{conditional value-at-risk at level $\alpha$}
  \cite{Rockafellar00}, that is,
  \begin{equation}
    \label{e:cvar}
    \cvar_{\alpha}\colon\HH\to\RR\colon
    x\mapsto\min_{\mathsf{y}\in\RR}\brk3{
      \mathsf{y}+\frac{1}{1-\alpha}\EE\brk!{
        \max\set{x-\mathsf{y},0}}
      },
  \end{equation}
  where the expectation is taken with respect to the probability space
  $\Xi$.
  Additionally, for each scenario $\xi\in\Xi$,
  let $f(\xi,\cdot)\colon\RR^d\to\RR$ be convex
  and let $\emp\neq C(\xi)\subset\RR^d$ be closed and convex.
  The task is to
  \begin{equation}
    \label{e:9}
    \minimize{\substack{x\in\VV\\
        (\forall\xi\in\Xi)\,\,x(\xi)\in C(\xi)}}{
      \cvar_{\alpha}\brk!{f(\Cdot,x(\Cdot))}},
  \end{equation}
  under the assumption that a solution exists and that the following
  constraint qualification is satisfied:
  \begin{equation}
    \label{e:cqc1}
    (\exi x\in\VV)(\forall\xi\in\Xi)\quad
    x(\xi)\in\reli C(\xi),
  \end{equation}
  where $\reli C$ stands for the relative interior of a convex set
  $C\subset\RR^d$.
\end{problem}

We employ the techniques of \cite{Rockafellar18} to reformulate
\cref{prob:2} as an instantiation of \cref{prob:1},
thereby enabling the use of \cref{e:algo}.

\begin{proposition}
  \label{p:3}
  Consider the setting of \cref{prob:2}.
  Equip the set $\GG=\set{y\colon\Xi\to\RR}$ with the expectation scalar
  product
  \begin{equation}
    (y,z)\mapsto\EE(yz)=\sum_{\xi\in\Xi}\pi(\xi)y(\xi)z(\xi),
  \end{equation}
  let $\HHH=\GG\oplus\HH$ be the Hilbert direct sum of
  $\GG$ and $\HH$, and we identify an element $(y,x)\in\HHH$ with the
  mapping
  \begin{equation}
    \Xi\to\RR\times\RR^d\colon
      \xi\mapsto\brk2{\brk!{y(\xi),x_{[1]}(\xi)},
        x_{[2]}(\xi),\ldots,x_{[N]}(\xi)}.
  \end{equation}
  We consider an augmented $N$-stage decision-making process in which:
  \begin{itemize}
    \item $\Xi$ (as in \cref{m:1}) is the probability space of all
      possible scenarios.
    \item $\HHH$ is the space of decision mappings.
    \item Given a decision mapping $(y,x)\in\HHH$,
      $\xi\mapsto(y(\xi),x_{[1]}(\xi))$ is the first-stage
      decision mapping and, for every $k\in\set{2,\ldots,N}$,
      $x_{[k]}$ is the $k$th-stage decision mapping.
  \end{itemize}
  Moreover, let $\VVV$ be the corresponding nonanticipativity
  subspace.
  Then the following hold:
  \begin{enumerate}
    \item\label{p:3i}
      Let $\WW$ be the vector subspace consisting of all constant
      mappings in $\GG$. Then $\VVV=\WW\times\VV$.
    \item\label{p:3ii}
      For each scenario $\xi\in\Xi$,
      let $\boldsymbol{A}(\xi,\Cdot)\colon\RR\times\RR^d\to
      2^{\RR\times\RR^d}$ be the subdifferential (with respect to
      the $\ell^2$ scalar product on $\RR\times\RR^d$) of
      \begin{equation}
        \boldsymbol{f}(\xi,\Cdot)\colon\RR\times\RR^d\to\RR\colon
        (\mathsf{y},\mathsf{x})\mapsto
        \mathsf{y}+\frac{1}{1-\alpha}\max\set{f(\xi,\mathsf{x})
          -\mathsf{y},0},
      \end{equation}
      and set
      $\boldsymbol{C}(\xi)=\RR\times C(\xi)$.
      Then the following hold:
      \begin{enumerate}
        \item\label{p:3iia}
          Let $\xi\in\Xi$. Then $\boldsymbol{A}(\xi,\Cdot)$ is maximally
          monotone.
        \item\label{p:3iib}
          Let $\overline{x}\in\HH$.
          Then $\overline{x}$ solves \cref{e:9} if and only if
          there exist $\overline{y}\in\GG$ and
          $\overline{\boldsymbol{v}}^*\in\VVV^{\perp}$
          such that
          $\overline{\boldsymbol{x}}=(\overline{y},\overline{x})$
          and $\overline{\boldsymbol{v}}^*$
          solve the stochastic equilibrium problem
          \begin{equation}
            \label{e:1+}
            \text{find $\boldsymbol{x}\in\VVV$ and
              $\boldsymbol{v}^*\in\VVV^{\perp}$ such that}\,\,
            (\forall\xi\in\Xi)\,\,
            {-}\boldsymbol{v}^*(\xi)\in
            \boldsymbol{A}\brk!{\xi,\boldsymbol{x}(\xi)}
            +N_{\boldsymbol{C}(\xi)}\boldsymbol{x}(\xi).
          \end{equation}
      \end{enumerate}
  \end{enumerate}
\end{proposition}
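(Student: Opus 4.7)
The plan is to verify \cref{p:3i} and \cref{p:3iia} by direct unpacking, and then derive \cref{p:3iib} from the variational representation of $\cvar_\alpha$ combined with \cref{r:1}\cref{r:1ii}.

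For \cref{p:3i}, I would unpack the definition of the augmented nonanticipativity subspace $\VVV$. The first-stage decision $\xi\mapsto(y(\xi),x_{[1]}(\xi))$ must be constant on $\Xi=\Xi/\sim_1$, which forces $y\in\WW$ and $x_{[1]}$ to be constant; for $k\in\set{2,\ldots,N}$, the augmented $k$th-stage decision is simply $x_{[k]}$, subject to the same constancy requirement on every class in $\Xi/\sim_k$ as in \cref{m:1}. These conditions decouple and yield $\VVV=\WW\times\VV$. For \cref{p:3iia}, I would verify that $\boldsymbol{f}(\xi,\Cdot)$ is proper, convex, and continuous on $\RR\times\RR^d$: convexity is immediate because $(\mathsf{y},\mathsf{x})\mapsto f(\xi,\mathsf{x})-\mathsf{y}$ is convex and both positive-part composition and nonnegative scaling preserve convexity; continuity holds because $f(\xi,\Cdot)$ is convex and real-valued on $\RR^d$. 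Maximal monotonicity of $\boldsymbol{A}(\xi,\Cdot)=\partial\boldsymbol{f}(\xi,\Cdot)$ then follows from the Moreau--Rockafellar theorem.

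The main step is \cref{p:3iib}. First I would identify each real number with its associated constant mapping in $\WW$ so that the variational formula \cref{e:cvar} can be recast as
\begin{equation*}
  \cvar_\alpha\brk!{f(\Cdot,x(\Cdot))}
  =\min_{y\in\WW}\EE\brk!{\boldsymbol{f}\brk!{\Cdot,(y,x)(\Cdot)}}.
\end{equation*}
Minimizing over $x$ admissible for \cref{e:9} and interchanging the two minimizations, together with part~\cref{p:3i} and the fact that $\boldsymbol{C}(\xi)=\RR\times C(\xi)$, shows that \cref{e:9} is equivalent to the risk-neutral augmented program
\begin{equation*}
  \minimize{\substack{\boldsymbol{x}\in\VVV\\
    (\forall\xi\in\Xi)\,\boldsymbol{x}(\xi)\in\boldsymbol{C}(\xi)}}{
    \EE\brk!{\boldsymbol{f}(\Cdot,\boldsymbol{x}(\Cdot))}}.
\end{equation*}
Crucially, the interchange preserves the argmin correspondence: $\overline{x}$ solves \cref{e:9} if and only if there exists $\overline{y}\in\WW$ such that $\overline{\boldsymbol{x}}=(\overline{y},\overline{x})$ solves the augmented program.

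The hard part will be converting this augmented program to the equilibrium form \cref{e:1+} via \cref{r:1}\cref{r:1ii}, which requires verifying a constraint qualification in $\HHH$. From \cref{e:cqc1} I can take $x\in\VV$ with $x(\xi)\in\reli C(\xi)$ for all $\xi\in\Xi$ and any $y\in\WW$ (say $y\equiv 0$); then $(y,x)\in\VVV$ and $(y(\xi),x(\xi))\in\RR\times\reli C(\xi)=\reli\boldsymbol{C}(\xi)$, so the constraint qualification lifts to the augmented space. With Fermat's rule applied at a solution, the subdifferential sum rule for $\boldsymbol{f}(\xi,\Cdot)+\iota_{\boldsymbol{C}(\xi)}$, and the identity $N_{\VVV}\overline{\boldsymbol{x}}=\VVV^\perp$ (valid because $\overline{\boldsymbol{x}}\in\VVV$), the optimality conditions rewrite precisely as \cref{e:1+}. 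Combining with the argmin correspondence from the previous paragraph completes the proof.
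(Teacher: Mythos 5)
Your proposal is correct and follows essentially the same route as the paper: part~(i) by unpacking the definition, part~(ii)(a) via continuity of the real-valued convex function $\boldsymbol{f}(\xi,\Cdot)$, and part~(ii)(b) by using the variational formula for $\cvar_\alpha$ to pass to the augmented risk-neutral program (with the argmin correspondence preserved by the exchange of minimizations), lifting the constraint qualification \cref{e:cqc1} to $\reli\boldsymbol{\mathcal{C}}$, and then applying Fermat's rule together with the subdifferential and normal-cone sum rules. The only cosmetic difference is that you phrase the last step as an appeal to \cref{r:1}\cref{r:1ii}, whereas the paper carries out that Fermat's-rule argument explicitly, which is exactly what you then describe doing anyway.
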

\begin{proof}
  \cref{p:3i}:
  Clear from the definition.

  \cref{p:3iia}:
  Note that $\boldsymbol{f}(\xi,\Cdot)$ is continuous as a
  real-valued convex function \cite[Corollary~8.40]{Livre1}.
  Thus $\boldsymbol{A}(\xi,\Cdot)$ is maximally monotone as the
  subdifferential of a continuous convex function
  \cite[Theorem~20.25]{Livre1}.

  \cref{p:3iib}:
  Set
  $\boldsymbol{\mathcal{C}}=\menge{(y,x)\in\HHH}{(\forall\xi\in\Xi)
    \,\,(y(\xi),x(\xi))\in\boldsymbol{C}(\xi)}$.
  Using calculus rules for relative interiors and \cref{e:cqc1}, we
  obtain $\VVV\cap\reli\boldsymbol{\mathcal{C}}\neq\emp$.
  Thus, since $\VVV$ is polyhedral as a vector subspace,
  it results from the sum rule \cite[Theorem~16.47(ii)]{Livre1} that
  \begin{equation}
    \label{e:sr33}
    N_{\VVV\cap\boldsymbol{\mathcal{C}}}
    =N_{\VVV}+N_{\boldsymbol{\mathcal{C}}}.
  \end{equation}
  At the same time, we derive from \cref{e:cvar} and \cref{p:3i} that
  \begin{align}
    &\min_{\substack{x\in\VV\\
        (\forall\xi\in\Xi)\,\,x(\xi)\in C(\xi)}}
    \cvar_{\alpha}\brk!{f(\Cdot,x(\Cdot))}
    \nonumber\\
    &\hspace{15mm}
    =\min_{\substack{\mathsf{y}\in\RR\\
        x\in\VV\\
        (\forall\xi\in\Xi)\,\,x(\xi)\in C(\xi)}}
    \EE\brk3{\mathsf{y}
      +\frac{1}{1-\alpha}
      \max\set!{f\brk!{\Cdot,x(\Cdot)}-\mathsf{y},0}}
    \nonumber\\
    &\hspace{15mm}
    =\min_{\substack{y\in\WW\\ x\in\VV\\
        (\forall\xi\in\Xi)\,\,x(\xi)\in C(\xi)}}
    \EE\brk3{y(\Cdot)+\frac{1}{1-\alpha}
      \max\set!{f\brk!{\Cdot,x(\Cdot)}-y(\Cdot),0}}
    \nonumber\\
    &\hspace{15mm}
    =\min_{(y,x)\in\VVV\cap\boldsymbol{\mathcal{C}}}
    \EE\brk2{\boldsymbol{f}\brk!{\Cdot,y(\Cdot),x(\Cdot)}}.
  \end{align}
  This argument also shows that
  $\overline{x}$ solves \cref{e:9} if and only if there exists
  $\overline{y}\in\GG$ such that $(\overline{y},\overline{x})$ solves
  \begin{equation}
    \label{e:61en}
    \minimize{(y,x)\in\VVV\cap\boldsymbol{\mathcal{C}}}{
    \EE\brk2{\boldsymbol{f}\brk!{\Cdot,y(\Cdot),x(\Cdot)}}}.
  \end{equation}
  Next, define
  $F\colon\HHH\to\RR\colon(y,x)\mapsto
  \EE\brk{\boldsymbol{f}\brk{\Cdot,y(\Cdot),x(\Cdot)}}$.
  Note that $F$ is convex and that, for every
  $(y,x)\in\HHH$ and every $(v,u)\in\HHH$, we have
  \begin{equation}
    (v,u)\in\partial F(y,x)
    \quad\Longleftrightarrow\quad
    (\forall\xi\in\Xi)\,\,
    \brk!{v(\xi),u(\xi)}\in\boldsymbol{A}\brk!{
      \xi,y(\xi),x(\xi)}
  \end{equation}
  and
  \begin{equation}
    (v,u)\in N_{\boldsymbol{\mathcal{C}}}(y,x)
    \quad\Longleftrightarrow\quad
    (\forall\xi\in\Xi)\,\,
    \brk!{v(\xi),u(\xi)}\in N_{\boldsymbol{C}(\xi)}\brk!{y(\xi),x(\xi)};
  \end{equation}
  see \cite[Proposition~16.9]{Livre1}.
  In turn, we derive from Fermat's rule
  (applied to \cref{e:61en}),
  sum rule \cite[Theorem~16.47(i)]{Livre1}, and \cref{e:sr33} that
  \begin{align}
    &\text{$\overline{x}$ solves \cref{e:9}}
    \nonumber\\
    &\hspace{2mm}
    \Longleftrightarrow
    (\exi\overline{y}\in\GG)\,\,
    \boldsymbol{0}
    \in\partial F(\overline{y},\overline{x})
    +N_{\VVV\cap\boldsymbol{\mathcal{C}}}(\overline{y},\overline{x})
    \nonumber\\
    &\hspace{2mm}
    \Longleftrightarrow
    (\exi\overline{y}\in\GG)\,\,
    \boldsymbol{0}
    \in\partial F(\overline{y},\overline{x})
    +N_{\VVV}(\overline{y},\overline{x})
    +N_{\boldsymbol{\mathcal{C}}}(\overline{y},\overline{x})
    \nonumber\\
    &\hspace{2mm}
    \Longleftrightarrow
    (\exi\overline{y}\in\GG)\,\,
    \begin{cases}
      (\overline{y},\overline{x})\in\VVV\\
      \brk!{\exi\overline{\boldsymbol{v}}^*\in\VVV^{\perp}}\,\,
      {-}\overline{\boldsymbol{v}}^*
    \in\partial F(\overline{y},\overline{x})
    +N_{\boldsymbol{\mathcal{C}}}(\overline{y},\overline{x})
    \end{cases}
    \nonumber\\
    &\hspace{2mm}
    \Longleftrightarrow
    (\exi\overline{y}\in\GG)\,\,
    \begin{cases}
      \overline{\boldsymbol{x}}=(\overline{y},\overline{x})\in\VVV\\
      \brk!{\exi\overline{\boldsymbol{v}}^*\in\VVV^{\perp}}
      (\forall\xi\in\Xi)\,\,
      {-}\overline{\boldsymbol{v}}^*(\xi)
      \in\boldsymbol{A}\brk!{\xi,\overline{\boldsymbol{x}}(\xi)}
      +N_{\boldsymbol{C}(\xi)}\brk!{\xi,\overline{\boldsymbol{x}}(\xi)},
    \end{cases}
  \end{align}
  which completes the proof.
\end{proof}

\begin{remark}
  \label{r:3}
  In the light of \cref{p:3}\cref{p:3iib},
  we can now apply the algorithm \cref{e:algo} to the stochastic
  equilibrium problem \cref{e:1+} to devise a provably-convergent
  block-activated algorithm for solving \cref{e:9},
  which activates $J_{\gamma\boldsymbol{A}(\xi,\Cdot)}$,
  $\proj_{C(\xi)}$, and $\proj_{\VV}$ separately.
  More precisely:
  \begin{itemize}
    \item
      The application of \cref{e:algo} to \cref{e:1+} requires
      evaluating $\proj_{\boldsymbol{C}(\xi)}$ and $\proj_{\VVV}$.
      However, the direct sum structure of $\HHH$ yields a natural
      decomposition in implementing these projectors.
      Indeed, thanks to \cite[Proposition~29.3]{Livre1},
      \begin{equation}
        (\forall\xi\in\Xi)\brk!{\forall(\mathsf{y},\mathsf{x})\in\RR\times\RR^d}\quad
        \proj_{\boldsymbol{C}(\xi)}(\mathsf{y},\mathsf{x})
        =(\mathsf{y},\proj_{C(\xi)}\mathsf{x})
      \end{equation}
      and
      \begin{equation}
        \brk!{\forall(y,x)\in\HHH}\quad
        \proj_{\VVV}(y,x)=(\proj_{\WW}y,\proj_{\VV}x).
      \end{equation}
      The closed-form expression for $\proj_{\VV}$ is stated in
      \cref{r:2}\cref{r:2iv} and projecting onto $\WW$ is just an
      averaging operation:
      \begin{equation}
        (\forall y\in\GG)\quad
        \proj_{\WW}y\colon\Xi\to\RR\colon
        \xi\mapsto\sum_{\eta\in\Xi}\pi(\eta)y(\eta).
      \end{equation}
    \item
      We have
      $J_{\gamma\boldsymbol{A}(\xi,\Cdot)}
      =\prox_{\gamma\boldsymbol{f}(\xi,\Cdot)}$, which can be evaluated
      through solving a univariate equation, as described in
      \cref{p:4} below.
  \end{itemize}
  By contrast, Rockafellar's
  \cite[Progressive hedging algorithm 2]{Rockafellar18}
  must, at each iteration, activate all scenarios and solve
  constrained minimization subproblems of the form
  \begin{equation}
    \minimize{\substack{\mathsf{y}\in\RR\\
        \mathsf{x}\in C(\xi)}}{
      \mathsf{y}+\frac{1}{1-\alpha}\max\set!{
        f\brk{\xi,\mathsf{x}}-\mathsf{y},0}
      +\mathsf{u}\mathsf{y}
      +\scal{\mathsf{w}}{\mathsf{x}}
      +\frac{1}{2\gamma}\brk!{\abs{\mathsf{y}-\mathsf{b}}^2
        +\norm{\mathsf{x}-\mathsf{z}}^2}
      },
  \end{equation}
  where $\set{\mathsf{u},\mathsf{b}}\subset\RR$ and
  $\set{\mathsf{w},\mathsf{z}}\subset\RR^d$ are given.
\end{remark}

\newpage

Establishing \cref{p:4} necessitates the following lemma, which is based
on ideas found in \cite{Combettes24b}.

\begin{lemma}
  \label{l:1}
  Let $\HH$ be a Euclidean space,
  let $f\colon\HH\to\RR$ be convex,
  let $\gamma\in\RPP$, let $x\in\HH$,
  and set $p=\prox_{\gamma\max\set{f,0}}x$.
  Then exactly one of the following holds:
  \begin{enumerate}
    \item\label{l:1i}
      $f(x)\leq 0$, in which case $p=x$.
    \item\label{l:1ii}
      $f(\prox_{\gamma f}x)>0$, in which case
      $p=\prox_{\gamma f}x$.
    \item\label{l:1iii}
      $f(x)>0$ and $f(\prox_{\gamma f}x)\leq 0$, in which case,
      the system of equations
      \begin{equation}
        \begin{cases}
          \label{e:wbn8}
          \theta\in\intv[l]{0}{1}\\
          f\brk!{\prox_{\theta\gamma f}x}=0
        \end{cases}
      \end{equation}
      has a solution and, for every $\theta$ satisfying \cref{e:wbn8},
      we have $p=\prox_{\theta\gamma f}x$.
  \end{enumerate}
\end{lemma}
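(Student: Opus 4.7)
The approach I would follow rests on the subdifferential characterization
\begin{equation*}
  p = \prox_{\gamma\max\set{f,0}} x
  \quad\Longleftrightarrow\quad
  x - p \in \gamma\,\partial\max\set{f,0}(p),
\end{equation*}
coupled with the formula for the subdifferential of a pointwise maximum of two continuous convex functions: writing $\phi = \max\set{f,0}$, one has $\partial\phi(y) = \set{0}$ when $f(y) < 0$, $\partial\phi(y) = \partial f(y)$ when $f(y) > 0$, and $\partial\phi(y) = \conv(\partial f(y) \cup \set{0})$ when $f(y) = 0$. Since $f$ is a real-valued convex function on a Euclidean space, it is automatically continuous, which legitimates these rules.

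For \cref{l:1i}, when $f(x) < 0$, the point $p = x$ satisfies $x - p = 0 \in \gamma\partial\phi(x) = \set{0}$, and the conclusion follows by uniqueness of the proximity operator. For \cref{l:1ii}, setting $q = \prox_{\gamma f} x$ yields $x - q \in \gamma\partial f(q)$; the assumption $f(q) > 0$ gives $\partial\phi(q) = \partial f(q)$, so $x - q \in \gamma\partial\phi(q)$ and the characterization yields $p = q$.

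Case \cref{l:1iii} is where the real work lies. I would first secure existence of a solution to \cref{e:wbn8} by invoking the intermediate value theorem for the map $\theta \mapsto f(\prox_{\theta\gamma f} x)$ on $\intv{0}{1}$: its value at $\theta = 0$ is $f(x) \geq 0$, its value at $\theta = 1$ is $f(\prox_{\gamma f}x) \leq 0$, and continuity follows from continuity of $f$ together with that of $\theta \mapsto \prox_{\theta\gamma f} x$ (a standard Moreau-envelope property). Once such a $\theta$ is fixed, set $q_\theta = \prox_{\theta\gamma f} x$, so that $x - q_\theta \in \theta\gamma\,\partial f(q_\theta)$. Because $f(q_\theta) = 0$ and $\theta \in \intv{0}{1}$, one has $\theta\,\partial f(q_\theta) \subset \conv(\partial f(q_\theta) \cup \set{0}) = \partial\phi(q_\theta)$, which delivers $x - q_\theta \in \gamma\partial\phi(q_\theta)$, hence $p = q_\theta$.

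It remains to check that the three cases are mutually exclusive and collectively exhaustive. Exhaustiveness is immediate by splitting on the sign of $f(x)$ and then, in the event $f(x) \geq 0$, on the sign of $f(\prox_{\gamma f} x)$. For exclusivity, the decisive observation is that when $f(x) < 0$, comparing the values of $y \mapsto \gamma f(y) + \tfrac{1}{2}\norm{x-y}^2$ at $y = x$ and at its unique minimizer $\prox_{\gamma f} x$ forces $f(\prox_{\gamma f} x) < 0$, so \cref{l:1i} precludes \cref{l:1ii}. The main obstacle I anticipate is justifying the subdifferential formula for $\max\set{f,0}$ at points where $f$ vanishes and correctly handling the boundary value $\theta = 0$ in \cref{l:1iii}; both can be addressed via the standard convex-analytic references already cited in the paper.
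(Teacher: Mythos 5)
Your argument is correct and follows essentially the same route as the paper: both characterize $p$ via the inclusion $x-p\in\gamma\,\partial\max\set{f,0}(p)$, resolve the three cases by the sign of $f$ at the relevant points, use the intermediate value theorem together with continuity of $\theta\mapsto\prox_{\theta\gamma f}x$ for case \cref{l:1iii}, and rule out the coexistence of \cref{l:1i} and \cref{l:1ii} via the descent inequality $\norm{x-\prox_{\gamma f}x}^2+\gamma f(\prox_{\gamma f}x)\leq\gamma f(x)$. The only cosmetic difference is that you invoke the max-rule $\partial\max\set{f,0}(y)=\conv\brk!{\partial f(y)\cup\set{0}}$ where the paper uses the chain rule for $\phi\circ f$ with $\phi=\max\set{\Cdot,0}$; since $\conv\brk!{\partial f(y)\cup\set{0}}=\bigcup_{\theta\in\intv{0}{1}}\theta\,\partial f(y)$ when $f(y)=0$, the two formulas coincide.
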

\begin{proof}
  Thanks to the inequality
  $\norm{x-\prox_{\gamma f}x}^2+\gamma f\brk{\prox_{\gamma f}x}
  \leq\gamma f(x)$ \cite[Proposition~12.27]{Livre1},
  the cases \cref{l:1i} and \cref{l:1ii} are mutually exclusive.
  Now set $\phi=\max\set{\Cdot,0}$.
  Since $\phi=d_{\intv[l]{\minf}{0}}$,
  we deduce from \cite[Example~16.62]{Livre1} that
  \begin{equation}
    \label{e:fp42}
    (\forall\kappa\in\RR)\quad
    \partial\phi(\kappa)=
    \begin{cases}
      \set{1},&\text{if}\,\,\kappa>0;\\
      \intv{0}{1},&\text{if}\,\,\kappa=0;\\
      \set{0},&\text{if}\,\,\kappa<0.
    \end{cases}
  \end{equation}
  In turn, we derive from
  \cite[Proposition~16.44 and Corollary~16.72]{Livre1}
  that
  \begin{align}
    \label{e:qf37}
    (\forall q\in\HH)\quad
    q=\prox_{\gamma\phi\circ f}x
    &\Longleftrightarrow
    \gamma^{-1}(x-q)\in\partial(\phi\circ f)(q)
    \nonumber\\
    &\Longleftrightarrow
    \brk!{\exi\theta\in\partial\phi\brk!{f(q)}}\,\,
    \gamma^{-1}(x-q)\in\theta\partial f(q)
    \nonumber\\
    &\Longleftrightarrow
    \brk!{\exi\theta\in\partial\phi\brk!{f(q)}}\,\,
    q=\prox_{\theta\gamma f}x.
  \end{align}

  \cref{l:1i}:
  In this case \cref{e:qf37} is satisfied with $q=x$ and $\theta=0$.

  \cref{l:1ii}:
  In this case \cref{e:qf37} is satisfied with
  $q=\prox_{\gamma f}x$ and $\theta=1$.

  \cref{l:1iii}:
  Applying \cite[Proposition~23.31(iii)]{Livre1} to the maximally
  monotone operator $\partial f$, we deduce that
  $\RPP\to\HH\colon\kappa\mapsto\prox_{\kappa f}x$ is continuous.
  At the same time, since $\HH$ is finite-dimensional
  and $f$ is real-valued and convex,
  $f$ is continuous \cite[Corollary~8.40]{Livre1}.
  Thus, the function
  $\psi\colon\RPP\to\RR\colon\kappa\mapsto f(\prox_{\kappa f}x)$
  is continuous and decreasing \cite[Proposition~12.33(iii)]{Livre1}
  with $\psi(\kappa)\uparrow f(x)>0$ as $\kappa\downarrow 0$.
  Hence, the intermediate value theorem guarantees that
  \cref{e:wbn8} has a solution.
  The remaining claim is a consequence of \cref{e:qf37},
  \cref{e:fp42}, and the uniqueness of proximal points.
\end{proof}

\begin{proposition}
  \label{p:4}
  Let $f\colon\RR^d\to\RR$ be convex,
  let $\alpha\in\zeroun$, and set
  \begin{equation}
    \boldsymbol{f}\colon\RR\times\RR^d\to\RR\colon
    (\mathsf{y},\mathsf{x})\mapsto
    \mathsf{y}+\frac{1}{1-\alpha}\max\set{f(\mathsf{x})
      -\mathsf{y},0}.
  \end{equation}
  Additionally,
  let $\gamma\in\RPP$,
  set $\tau=\gamma/(1-\alpha)$,
  let $(\mathsf{y},\mathsf{x})\in\RR\times\RR^d$,
  and set
  $(\mathsf{q},\mathsf{p})
  =\prox_{\gamma\boldsymbol{f}}(\mathsf{y},\mathsf{x})$.
  Then exactly one of the following holds:
  \begin{enumerate}
    \item\label{p:4i}
      $f(\mathsf{x})-\mathsf{y}+\gamma\leq 0$,
      in which case
      $(\mathsf{q},\mathsf{p})=(\mathsf{y}-\gamma,\mathsf{x})$.
    \item\label{p:4ii}
      $f(\prox_{\tau f}\mathsf{x})-\mathsf{y}>\tau-\gamma$,
      in which case
      $(\mathsf{q},\mathsf{p})=(\mathsf{y}-\gamma+\tau,
      \prox_{\tau f}\mathsf{x})$.
    \item\label{p:4iii}
      $f(\mathsf{x})-\mathsf{y}+\gamma>0$ and
      $f(\prox_{\tau f}\mathsf{x})-\mathsf{y}\leq\tau-\gamma$,
      in which case
      the system of equations
      \begin{equation}
        \begin{cases}
          \label{e:wbn9}
          \theta\in\intv[l]{0}{1}\\
          f\brk!{\prox_{\theta\tau f}\mathsf{x}}-\mathsf{y}+\gamma
          -\theta\tau=0
        \end{cases}
      \end{equation}
      has a solution and, for every $\theta$ satisfying \cref{e:wbn9},
      $(\mathsf{q},\mathsf{p})=(\mathsf{y}-\gamma+\theta\tau,
      \prox_{\theta\tau f}\mathsf{x})$.
  \end{enumerate}
\end{proposition}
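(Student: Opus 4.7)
The plan is to reduce \cref{p:4} to \cref{l:1} by decoupling the linear term in $\boldsymbol{f}$ from its nonsmooth part. Introduce the convex function
\begin{equation*}
  \tilde{f}\colon\RR\times\RR^d\to\RR\colon
  (\mathsf{q},\mathsf{p})\mapsto f(\mathsf{p})-\mathsf{q},
\end{equation*}
so that $\gamma\boldsymbol{f}(\mathsf{q},\mathsf{p})=\gamma\mathsf{q}+\tau\max\set{\tilde{f}(\mathsf{q},\mathsf{p}),0}$. First I would complete the square in the $\mathsf{q}$-coordinate: since $\gamma\mathsf{q}+\tfrac{1}{2}(\mathsf{q}-\mathsf{y})^2=\tfrac{1}{2}(\mathsf{q}-(\mathsf{y}-\gamma))^2$ up to an additive constant, absorbing $\gamma\mathsf{q}$ into the quadratic penalty yields the key identity
\begin{equation*}
  \prox_{\gamma\boldsymbol{f}}(\mathsf{y},\mathsf{x})
  =\prox_{\tau\max\set{\tilde{f},0}}(\mathsf{y}-\gamma,\mathsf{x}),
\end{equation*}
whose right-hand side is an instance of \cref{l:1} on the Euclidean space $\RR\times\RR^d$ with $f$, $\gamma$, and $x$ replaced by $\tilde{f}$, $\tau$, and $(\mathsf{y}-\gamma,\mathsf{x})$, respectively.

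Next I would derive an explicit formula for $\prox_{\kappa\tilde{f}}(\mathsf{y}-\gamma,\mathsf{x})$ at arbitrary $\kappa\in\RPP$. Because $\tilde{f}$ is separable as $-\mathsf{q}+f(\mathsf{p})$, the variational problem defining this proximity operator decouples: the first-order condition in the $\mathsf{q}$-variable forces $\mathsf{q}=\mathsf{y}-\gamma+\kappa$, while the first-order condition in the $\mathsf{p}$-variable gives $\mathsf{p}=\prox_{\kappa f}\mathsf{x}$. Consequently,
\begin{equation*}
  \prox_{\kappa\tilde{f}}(\mathsf{y}-\gamma,\mathsf{x})
  =\brk!{\mathsf{y}-\gamma+\kappa,\prox_{\kappa f}\mathsf{x}}.
\end{equation*}

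Finally, the three regimes of \cref{l:1} translate term-for-term into the three cases of the proposition. Evaluating $\tilde{f}(\mathsf{y}-\gamma,\mathsf{x})=f(\mathsf{x})-\mathsf{y}+\gamma$ matches the sign condition in \cref{p:4i}, and \cref{l:1}\cref{l:1i} then returns the unperturbed point $(\mathsf{y}-\gamma,\mathsf{x})$. Evaluating $\tilde{f}$ at $\prox_{\tau\tilde{f}}(\mathsf{y}-\gamma,\mathsf{x})=(\mathsf{y}-\gamma+\tau,\prox_{\tau f}\mathsf{x})$ gives $f(\prox_{\tau f}\mathsf{x})-\mathsf{y}+\gamma-\tau$, whose sign is precisely the condition of \cref{p:4ii}, and \cref{l:1}\cref{l:1ii} delivers the claimed output via the separable formula. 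In the remaining case, the scalar equation $\tilde{f}(\prox_{\theta\tau\tilde{f}}(\mathsf{y}-\gamma,\mathsf{x}))=0$ from \cref{l:1}\cref{l:1iii} rewrites as $f(\prox_{\theta\tau f}\mathsf{x})-\mathsf{y}+\gamma-\theta\tau=0$, i.e.\ \cref{e:wbn9}, and the separable formula yields \cref{p:4iii}. The main obstacle is purely bookkeeping: care is needed so that the completion-of-squares step shifts the center to $\mathsf{y}-\gamma$ (not $\mathsf{y}+\gamma$) and that each trichotomy condition is transcribed without sign errors. No mathematical content beyond \cref{l:1} is required.
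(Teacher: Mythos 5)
Your proposal is correct and follows essentially the same route as the paper: the paper also reduces to \cref{l:1} by writing $\gamma\boldsymbol{f}$ as a linear term plus $\tau\max\set{\boldsymbol{g},0}$ with $\boldsymbol{g}(\mathsf{q},\mathsf{p})=f(\mathsf{p})-\mathsf{q}$ (your $\tilde{f}$), shifting the prox argument to $(\mathsf{y}-\gamma,\mathsf{x})$, and using the separable formula $\prox_{\kappa\boldsymbol{g}}(\mathsf{b},\mathsf{a})=(\mathsf{b}+\kappa,\prox_{\kappa f}\mathsf{a})$. The only difference is that you verify the two auxiliary prox identities by direct computation where the paper cites standard calculus rules from \cite{Livre1}.
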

\begin{proof}
  Set $\phi=\max\set{\Cdot,0}$ and
  $\boldsymbol{g}\colon\RR\times\RR^d\to\RR\colon
  (\mathsf{y},\mathsf{x})\mapsto f(\mathsf{x})-\mathsf{y}$.
  Then, \cite[Proposition~24.11]{Livre1} gives
  \begin{equation}
    \brk!{\forall\kappa\in\RPP}
    \brk!{\forall(\mathsf{b},\mathsf{a})\in\RR\times\RR^d}\quad
    \prox_{\kappa\boldsymbol{g}}(\mathsf{b}-\gamma,\mathsf{a})
    =(\mathsf{b}-\gamma+\kappa,\prox_{\kappa f}\mathsf{a}).
  \end{equation}
  At the same time, because
  $\gamma\boldsymbol{f}
  =\scal{(\mathsf{y},\mathsf{x})}{(\gamma,\mathsf{0})}+
  \tau\phi\circ\boldsymbol{g}$,
  it results from
  \cite[Proposition~24.8(i)]{Livre1} that
  $(\mathsf{q},\mathsf{p})
    =\prox_{\tau\phi\circ\boldsymbol{g}}(\mathsf{y}-\gamma,\mathsf{x})$.
  Now apply \cref{l:1}.
\end{proof}

We conclude this section by outlining an additional application of the
proposed algorithmic framework.

\begin{remark}
\label{r:8}
By combining the reformulation technique of Rockafellar and Uryasev in
\cite{Rockafellar20} and the proposed algorithmic framework, we can
devise an asynchronous block-activated decomposition method with
the features \cref{G1,G2,G3} for solving constrained
multi-stage stochastic programming with \emph{buffered probability
of exceedance} objective functions.
\end{remark}

\section{Numerical experiments}
\label{sec:5}

As discussed in \cref{cf:1,cf:2}, the computational efficiency of
the progressive hedging algorithm \cref{e:PH} depends critically on
the efficient evaluation, at every iteration, of all the resolvents
\begin{equation}
\brk!{J_{\gamma\brk{A(\xi,\Cdot)+N_{C(\xi)}}}}_{\xi\in\Xi}.
\end{equation}
Consequently, a substantial portion of the computational cost
of the progressive hedging algorithm is devoted to solving
these scenario subproblems.

The aim of this section is therefore twofold:
\begin{itemize}
\item
To demonstrate the computational gains enabled by feature
\cref{G2} of the proposed algorithm \cref{e:algo}.
\item
To compare the numerical performance of \cref{e:algo} with that of
the progressive hedging algorithm \cref{e:PH}.
\end{itemize}
For these purposes, we revisit the class of two-stage stochastic
affine variational inequalities considered in the numerical study
of Rockafellar and Sun \cite{Rockafellar19}. These test problems
are instantiations of \cref{prob:1} with the following
specifications:
\begin{itemize}
\item
The number of stages is $N=2$ and the scenario set $\Xi$ is of the
form $\Xi=\Xi_1\times\Xi_2$.
The probabilities $(\pi(\xi))_{\xi\in\Xi}$ are obtained by first
sampling a vector uniformly from $\intv{0.5}{1.5}^{\Xi}$
and then normalizing its entries so that
$\sum_{\xi\in\Xi}\pi(\xi)=1$.
\item
For each scenario $\xi\in\Xi$,
\begin{equation}
A(\xi,\Cdot)=M(\xi)\Cdot+q(\xi)
\quad\text{and}\quad
C(\xi)=\RP^d,
\end{equation}
where $\RR^d\ni q(\xi)\sim\mathcal{N}(0,\Id)$
and $M(\xi)\in\RR^{d\times d}$ is a matrix generated as follows:
\begin{itemize}
\item
Randomly generate a symmetric positive semidefinite matrix
$M_{\mathsf{S}}(\xi)\in\RR^{d\times d}$ of rank $\lceil 3d/4\rceil$.
\item
Randomly generate a nonzero skew-symmetric matrix
$M_{\mathsf{A}}(\xi)\in\RR^{d\times d}$.
\item
Set $M(\xi)=M_{\mathsf{S}}(\xi)+\alpha M_{\mathsf{A}}(\xi)$,
where $\alpha=0.2$.
\end{itemize}
Thus $M(\xi)$ defines a monotone---generally not
self-adjoint---linear operator.
\end{itemize}
We consider a fixed-size benchmark
\begin{equation}
d_1=d_2=150,\quad
\card{\Xi_1}=50,\quad
\text{and}\quad
\card{\Xi_2}=40,
\end{equation}
and apply both algorithms to the same 10 randomly generated
instances. With this choice,
both algorithms require nontrivial computational effort
while remaining practical to execute on a personal laptop.
Furthermore, the scenario set $\Xi$ contains
$(\card{\Xi_1})\times(\card{\Xi_2})=2000$ scenarios.
Consequently, the progressive hedging algorithm \cref{e:PH} must,
at every iteration, solve 2,000 scenario subproblems, each
corresponding to a linear complementarity problem
involving a dense, unstructured $300\times 300$ matrix.

\paragraph{\textcolor{structure}{Implementation of the proposed
algorithm \cref{e:algo}}}{%
The algorithm is initialized at the zero vectors
and all scenarios are activated at each iteration, that is,
$(\forall n\in\NN)$ $\Xi_n=\Xi$.
We choose
\begin{equation}
(\forall\xi\in\Xi)\quad
U(\xi)=\RR^d,
\end{equation}
and the parameters $(\gamma_{\xi,n})_{\xi\in\Xi,n\in\NN}$,
$(\mu_{\xi,n})_{\xi\in\Xi,n\in\NN}$, and
$(\lambda_n)_{n\in\NN}$ are kept constant across
scenarios and iterations, namely
\begin{equation}
(\forall\xi\in\Xi)(\forall n\in\NN)\quad
\gamma_{\xi,n}=\gamma,
\quad
\mu_{\xi,n}=\mu,
\quad\text{and}\quad
\lambda_n=\lambda.
\end{equation}
The values of $\gamma$, $\mu$, and $\lambda$ were selected through
preliminary experimentation to provide good overall performance,
and then kept fixed throughout all reported experiments. In
particular:
\begin{equation}
\gamma=\frac{1}{\sqrt{d}},\quad
\mu=1,
\quad\text{and}\quad
\lambda=1.
\end{equation}
To evaluate each resolvent $J_{\gamma A(\xi,\Cdot)}$ in the proposed
algorithm \cref{e:algo}, note that
\begin{equation}
(\forall\mathsf{x}\in\RR^d)\quad
J_{\gamma A(\xi,\Cdot)}\mathsf{x}
=\brk!{\Id+\gamma M(\xi)}^{-1}\brk!{\mathsf{x}-\gamma q(\xi)}.
\end{equation}
Thus, for each scenario $\xi\in\Xi$, we compute an
$\mathsf{LU}$-factorization of $\Id+\gamma M(\xi)$
once during the initialization phase.
The resulting $\mathsf{LU}$-factorizations are then reused at every
subsequent iteration. Hence, each evaluation of $J_{\gamma
A(\xi,\Cdot)}$ reduces to solving only two \emph{triangular} linear
systems.
}

\paragraph{\textcolor{structure}{Implementation of the progressive
hedging algorithm \cref{e:PH}}}{%
The algorithm is initialized at the zero vectors, and the parameter
$\gamma$ was selected through preliminary
experimentation---including the heuristic $\gamma=1/\sqrt{d}$
considered in \cite{Rockafellar19}---to
provide good overall performance, and then kept fixed throughout
all reported experiments; specifically,
\begin{equation}
\gamma=\frac{1}{\sqrt{d}}.
\end{equation}
At each iteration and for each scenario
$\xi\in\Xi$, evaluating the resolvent
$J_{\gamma\brk{A(\xi,\Cdot)+N_{C(\xi)}}}$
at a point $\mathsf{x}\in\RR^d$ amounts to finding a zero of
\begin{equation}
T_{\mathsf{x}}(\xi,\Cdot)\colon\RR^d\to\RR^d\colon
\mathsf{z}\mapsto\mathsf{z}-\proj_{\RP^d}\brk!{
\mathsf{z}-M(\xi)\mathsf{z}-\gamma^{-1}\mathsf{z}
+\gamma^{-1}\mathsf{x}
-q(\xi)}.
\end{equation}
Following \cite{Rockafellar19}, we solve the subproblem
\begin{equation}
\text{find}\,\,\mathsf{z}\in\RR^d\,\,\text{such that}\,\,
T_{\mathsf{x}}(\xi,\mathsf{z})=0
\end{equation}
by using the semismooth Newton method of \cite{Sun93},
together with the warm-start strategy described in
\cite{Rockafellar19}.
}

\paragraph{\textcolor{structure}{Stopping criterion}}{%
We first observe that, since the operators
$(A(\xi,\Cdot))_{\xi\in\Xi}$ are single-valued in the present
setting, we have
\begin{multline}
(\forall x\in\VV)(\forall v^*\in\VV^{\perp})\quad
\brk[s]!{\,(\forall\xi\in\Xi)\,\,
{-}v^*(\xi)\in A\brk!{\xi,x(\xi)}+N_{C(\xi)}x(\xi)
\,}\\
\Longleftrightarrow\quad
\sum_{\xi\in\Xi}\pi(\xi)\norm!{x(\xi)
-\proj_{C(\xi)}\brk!{x(\xi)-v^*(\xi)-A\brk!{\xi,x(\xi)}}
}_2^2=0.
\end{multline}
Moreover, because the sequence
$(x_n,v_n^*)_{n\in\NN}$ in both algorithms
\cref{e:algo,e:PH} satisfy
\begin{equation}
\begin{cases}
\set{x_n}_{n\in\NN}\subset\VV,\,\,
\set{v_n^*}_{n\in\NN}\subset\VV^{\perp}\\
\text{$(x_n,v_n^*)_{n\in\NN}$ converges to a solution to
\cref{e:1}},
\end{cases}
\end{equation}
it follows from the continuity of $(A(\xi,\Cdot))_{\xi\in\Xi}$ that
\begin{equation}
\sum_{\xi\in\Xi}\pi(\xi)\norm!{x_n(\xi)
-\proj_{C(\xi)}\brk!{x_n(\xi)-v_n^*(\xi)-A\brk!{\xi,x_n(\xi)}}
}_2^2\to 0.
\end{equation}
Thus, we terminate \cref{e:algo,e:PH} when
\begin{equation}
\label{e:crit}
\sqrt{\sum_{\xi\in\Xi}\pi(\xi)\norm!{x_n(\xi)
-\proj_{C(\xi)}\brk!{x_n(\xi)-v_n^*(\xi)-A\brk!{\xi,x_n(\xi)}}
}_2^2}
\leq 10^{-5},
\end{equation}
or when a prescribed maximum number of iterations has been reached.
A run is declared \emph{successful} if the algorithm satisfies
\cref{e:crit} within the prescribed maximum of 5,000 outer iterations.
}

\paragraph{\textcolor{structure}{Results}}{%
All experiments were implemented in Python~3.9.6
using Numpy~2.0.2 and Scipy~1.13.1, and executed on a MacBook Pro
M4 with 24~GB of RAM.
Both algorithms were implemented using standard serial loops
over the scenarios, without algorithm-specific parallelization.
For each method, we report in \cref{tab:runtime}
the number of successful runs and summary statistics of the
wall-clock time in seconds over the successful runs---for the
proposed algorithm \cref{e:algo}, this includes the one-time
$\mathsf{LU}$-factorizations performed during
initialization. Specifically, we report
the minimum, first and third quartiles (that is, Q1 and Q3,
respectively), median, average, and maximum runtimes.
Because both algorithms were
executed on identical randomly generated instances, we additionally
report in \cref{tab:ratio} the paired runtime ratio
\begin{equation}
\frac{T_{\cref{e:PH}}}{T_{\cref{e:algo}}},
\end{equation}
where $T_{\cref{e:PH}}$ and $T_{\cref{e:algo}}$ denote the
wall-clock time of the algorithms \cref{e:PH,e:algo},
respectively, for the same randomly generated instance.

\begin{table}[ht]
\centering
\begin{tabular}{lccccccc}
\toprule
\multirow{2}{*}{Algorithm} &
\multirow{2}{*}{Success} &
\multicolumn{6}{c}{Wall-clock time (s)}\\
\cmidrule(lr){3-8}
&
&
Min &
Q1 &
Median &
Average &
Q3 &
Max\\
\midrule
Proposed \cref{e:algo} &
10/10 &
107.1 &
129.872 &
142.862 &
143.367 &
160.164 &
173.377 \\
\midrule
Progressive hedging
\cref{e:PH} &
10/10 &
408.021 &
411.734 &
413.929 &
413.169 &
414.64  &
416.912
\end{tabular}
\caption{Wall-clock time statistics over the successful runs.}
\label{tab:runtime}
\end{table}

\begin{table}[ht]
\centering
\begin{tabular}{cccccc}
\toprule
\multicolumn{6}{c}{Paired runtime ratio
$T_{\cref{e:PH}}/T_{\cref{e:algo}}$}
\\
\midrule
Min &
Q1 &
Median &
Average &
Q3 &
Max\\
2.393 &
2.594 &
2.896 &
2.943 &
3.187 &
3.841
\end{tabular}
\caption{Statistics of the paired runtime ratio
$T_{\cref{e:PH}}/T_{\cref{e:algo}}$ over the successful runs.}
\label{tab:ratio}
\end{table}

Both algorithms successfully solved all ten instances.
As reported in \cref{tab:runtime,tab:ratio}, the proposed algorithm
\cref{e:algo} consistently required less wall-clock time than the
progressive hedging algorithm \cref{e:PH}, with paired runtime
ratios ranging from 2.393 to 3.841, and a median ratio of 2.896.
}

\newpage
\bibliographystyle{jnsao}
\bibliography{bibli}

\begin{thebibliography}{10}

\bibitem{Bareilles20}
G{.\nobreak\kern 0.33333em}Bareilles, Y{.\nobreak\kern 0.33333em}Laguel, D{.\nobreak\kern 0.33333em}Grishchenko, F{.\nobreak\kern 0.33333em}Iutzeler, and J{.\nobreak\kern 0.33333em}Malick, Randomized progressive hedging methods for multi-stage stochastic programming, \emph{Ann. Oper. Res.} 295 (2020),  535--560, \href{https://dx.doi.org/10.1007/s10479-020-03811-5}{\nolinkurl{doi:10.1007/s10479-020-03811-5}}.

\bibitem{Livre1}
H.\,H{.\nobreak\kern 0.33333em}Bauschke and P.\,L{.\nobreak\kern 0.33333em}Combettes, \emph{Convex {A}nalysis and {M}onotone {O}perator {T}heory in {H}ilbert {S}paces}, Springer, New York, 2nd edition, 2017, \href{https://dx.doi.org/10.1007/978-3-319-48311-5}{\nolinkurl{doi:10.1007/978-3-319-48311-5}}.

\bibitem{Combettes24b}
L.\,M{.\nobreak\kern 0.33333em}Brice\~{n}o Arias and P.\,L{.\nobreak\kern 0.33333em}Combettes, A perturbation framework for convex minimization and monotone inclusion problems with nonlinear compositions, \emph{Math. Oper. Res.} 49 (2024),  1890--1914, \href{https://dx.doi.org/10.1287/moor.2022.0180}{\nolinkurl{doi:10.1287/moor.2022.0180}}.

\bibitem{Icassp22}
M.\,N{.\nobreak\kern 0.33333em}B\`{u}i, P.\,L{.\nobreak\kern 0.33333em}Combettes, and Z.\,C{.\nobreak\kern 0.33333em}Woodstock, Block-activated algorithms for multicomponent fully nonsmooth minimization, in \emph{Proceedings of the 2022 {IEEE} {I}nternational {C}onference on {A}coustics, {S}peech, and {S}ignal {P}rocessing}, IEEE, 2022-05-22/2022-05-27,  5428--5432.

\bibitem{Combettes24}
P.\,L{.\nobreak\kern 0.33333em}Combettes, The geometry of monotone operator splitting methods, \emph{Acta Numer.} 33 (2024),  487--632, \href{https://dx.doi.org/10.1017/S0962492923000065}{\nolinkurl{doi:10.1017/s0962492923000065}}.

\bibitem{Combettes18}
P.\,L{.\nobreak\kern 0.33333em}Combettes and J{.\nobreak\kern 0.33333em}Eckstein, Asynchronous block-iterative primal-dual decomposition methods for monotone inclusions, \emph{Math. Program.} B168 (2018),  645--672, \href{https://dx.doi.org/10.1007/s10107-016-1044-0}{\nolinkurl{doi:10.1007/s10107-016-1044-0}}.

\bibitem{Combettes25}
P.\,L{.\nobreak\kern 0.33333em}Combettes and J.\,I{.\nobreak\kern 0.33333em}Madariaga, Asymptotic analysis of an abstract stochastic scheme for solving monotone inclusions, 2025, \href{https://arxiv.org/abs/2512.03023}{\nolinkurl{arXiv:2512.03023}}.

\bibitem{Combettes15}
P.\,L{.\nobreak\kern 0.33333em}Combettes and J.\,C{.\nobreak\kern 0.33333em}Pesquet, Stochastic quasi-{F}ej\'{e}r block-coordinate fixed point iterations with random sweeping, \emph{SIAM J. Optim.} 25 (2015),  1221--1248, \href{https://dx.doi.org/10.1137/140971233}{\nolinkurl{doi:10.1137/140971233}}.

\bibitem{Eckstein17}
J{.\nobreak\kern 0.33333em}Eckstein, A simplified form of block-iterative operator splitting and an asynchronous algorithm resembling the multi-block alternating direction method of multipliers, \emph{J. Optim. Theory Appl.} 173 (2017),  155--182, \href{https://dx.doi.org/10.1007/s10957-017-1074-7}{\nolinkurl{doi:10.1007/s10957-017-1074-7}}.

\bibitem{Eckstein25}
J{.\nobreak\kern 0.33333em}Eckstein, J.\,P{.\nobreak\kern 0.33333em}Watson, and D.\,L{.\nobreak\kern 0.33333em}Woodruff, Projective hedging algorithms for multistage stochastic programming, supporting distributed and asynchronous implementation, \emph{Oper. Res.} 73 (2025),  311--324, \href{https://dx.doi.org/10.1287/opre.2022.0228}{\nolinkurl{doi:10.1287/opre.2022.0228}}.

\bibitem{Pennanen06}
T{.\nobreak\kern 0.33333em}Pennanen and M{.\nobreak\kern 0.33333em}Kallio, A splitting method for stochastic programs, \emph{Ann. Oper. Res.} 142 (2006),  259--268, \href{https://dx.doi.org/10.1007/s10479-006-6171-1}{\nolinkurl{doi:10.1007/s10479-006-6171-1}}.

\bibitem{Sun93}
L{.\nobreak\kern 0.33333em}Qi and J{.\nobreak\kern 0.33333em}Sun, A nonsmooth version of {N}ewton's method, \emph{Math. Program.} 58 (1993),  353--367, \href{https://dx.doi.org/10.1007/BF01581275}{\nolinkurl{doi:10.1007/bf01581275}}.

\bibitem{Rockafellar18}
R.\,T{.\nobreak\kern 0.33333em}Rockafellar, Solving stochastic programming problems with risk measures by progressive hedging, \emph{Set-Valued Var. Anal.} 26 (2018),  759--768, \href{https://dx.doi.org/10.1007/s11228-017-0437-4}{\nolinkurl{doi:10.1007/s11228-017-0437-4}}.

\bibitem{Rockafellar19}
R.\,T{.\nobreak\kern 0.33333em}Rockafellar and J{.\nobreak\kern 0.33333em}Sun, Solving monotone stochastic variational inequalities and complementarity problems by progressive hedging, \emph{Math. Program.} B174 (2019),  453--471, \href{https://dx.doi.org/10.1007/s10107-018-1251-y}{\nolinkurl{doi:10.1007/s10107-018-1251-y}}.

\bibitem{Rockafellar00}
R.\,T{.\nobreak\kern 0.33333em}Rockafellar and S{.\nobreak\kern 0.33333em}Uryasev, Optimization of conditional value-at-risk, \emph{J. Risk} 2 (2000),  21--41, \href{https://dx.doi.org/10.21314/JOR.2000.038}{\nolinkurl{doi:10.21314/jor.2000.038}}.

\bibitem{Rockafellar20}
R.\,T{.\nobreak\kern 0.33333em}Rockafellar and S{.\nobreak\kern 0.33333em}Uryasev, Minimizing buffered probability of exceedance by progressive hedging, \emph{Math. Program.} B181 (2020),  453--472, \href{https://dx.doi.org/10.1007/s10107-019-01462-4}{\nolinkurl{doi:10.1007/s10107-019-01462-4}}.

\bibitem{Rockafellar91}
R.\,T{.\nobreak\kern 0.33333em}Rockafellar and R.\,J.\,B{.\nobreak\kern 0.33333em}Wets, Scenarios and policy aggregation in optimization under uncertainty, \emph{Math. Oper. Res.} 16 (1991),  119--147, \href{https://dx.doi.org/10.1287/moor.16.1.119}{\nolinkurl{doi:10.1287/moor.16.1.119}}.

\bibitem{Rockafellar17}
R.\,T{.\nobreak\kern 0.33333em}Rockafellar and R.\,J.\,B{.\nobreak\kern 0.33333em}Wets, Stochastic variational inequalities: single-stage to multistage, \emph{Math. Program.} B165 (2017),  331--360, \href{https://dx.doi.org/10.1007/s10107-016-0995-5}{\nolinkurl{doi:10.1007/s10107-016-0995-5}}.

\end{thebibliography}

\end{document}